\newtheorem{theorem}{Theorem}
\newtheorem{corollary}[theorem]{Corollary}
\newtheorem{definition}[theorem]{Definition}
\newtheorem{lemma}[theorem]{Lemma}
\newtheorem{proposition}[theorem]{Proposition}
\newtheorem{remark}[theorem]{Remark}
\newenvironment{proof}{{\bf Proof.}}{\hfill$\rule{1ex}{1ex}$\par\medskip}
\begin{document}
\topmargin=-0.0in
\textheight= 8.5in
\textwidth=5.5in
\oddsidemargin=-0.0in
\newcommand{\bbbr}{\mbox{$\mathbb R$}}%{\mbox{$I\!\!R$}}
\newcommand{\bbbn}{\mbox{$\mathbb N$}}
\newcommand{\bbbc}{\mbox{$\mathbb C$}}
\newcommand{\R}{\mbox{$\mathbb R$}}
\def\n{\nabla}
\def\I{\mathfrak I}
\def\F{\mathcal F}
\def\g{\mathfrak{g}}
\def\h{\mathfrak{h}}
\def\m{\mathfrak{m}}
\def\ad{\rm ad}
\def\Ad{\rm Ad}
\def\E{\mathcal E}
\def\pr{\mathfrak{h}}
\def\TM{{\mathcal T}M}
\def\P{{{\mathcal R}}}
\newcommand{\bt}{\begin{theorem}}
\newcommand{\et}{\end{theorem}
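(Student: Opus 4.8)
The plan is to reduce the statement to an algebraic condition at the base point $o = eH$ and to exploit $G$-invariance throughout. Writing $\g = \h \oplus \m$ for the fixed reductive decomposition, I would first identify the tangent space $T_oM$ with $\m$ via the restriction of the projection $\g \to \g/\h$, and note that under this identification the isotropy representation of $H$ on $T_oM$ is carried to $\Ad(H)|_\m$, with differential $\ad(\h)|_\m$. The first step is therefore to express the geometric object in the statement—the invariant connection $\n$ and its associated curvature $\P$—through its action on fundamental vector fields, so that every global equation becomes an identity among linear maps on $\m$.

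Next I would set up the correspondence. For the forward implication, given a $G$-invariant $\n$, evaluating $\n_X Y$ at $o$ for fundamental vector fields $X,Y$ defines a bilinear map $\alpha\colon \m \times \m \to \m$; $G$-invariance forces $\alpha$ to be $\Ad(H)$-equivariant, i.e. $\alpha(\Ad(h)\xi,\Ad(h)\eta) = \Ad(h)\,\alpha(\xi,\eta)$, and differentiating yields the infinitesimal condition involving $\ad(\h)|_\m$. For the converse I would start from such an $\alpha$ and reconstruct $\n$ by imposing this formula at $o$ and extending by $G$-invariance; the work there is to check that the extension is well defined, independently of the choice of coset representative, which is exactly where the $\Ad(H)$-equivariance of $\alpha$ enters.

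With the correspondence in hand, the torsion and curvature expressions follow by inserting the structure equations of $G$. Here I would use the bracket relation for fundamental vector fields, $[X_\xi, X_\eta] = -X_{[\xi,\eta]}$, together with the decomposition $[\xi,\eta] = [\xi,\eta]_\h + [\xi,\eta]_\m$, and substitute into the defining formulas for the torsion $T(X,Y) = \n_X Y - \n_Y X - [X,Y]$ and the curvature $\P(X,Y) = \n_X \n_Y - \n_Y \n_X - \n_{[X,Y]}$. Evaluating at $o$ converts these into purely algebraic expressions in $\alpha$ and in the $\h$- and $\m$-components of the Lie bracket on $\g$.

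The main obstacle I anticipate is the well-definedness in the reconstruction step: showing that the pointwise algebraic datum on $\m$ glues to a smooth, genuinely $G$-invariant object on all of $M$. This amounts to verifying compatibility on overlaps of local sections of the bundle $G \to M$, and the $\ad(\h)$-invariance of $\alpha$ is precisely the consistency condition that makes this work; once it is secured, smoothness is automatic from the smoothness of the $G$-action, and the remaining curvature identities reduce to routine bracket bookkeeping in $\g$.
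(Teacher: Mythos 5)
Your proposal does not prove any statement that actually occurs in this paper. The ``statement'' you were given is a fragment of the preamble (the macro definitions \texttt{\textbackslash bt}/\texttt{\textbackslash et}, which merely expand to \texttt{\textbackslash begin\{theorem\}} and \texttt{\textbackslash end\{theorem\}}); the theorems these macros delimit in the body of the paper are results on Finsler holonomy: (a) the Lie algebra generated by vector fields strongly tangent to a subgroup $H$ of $\mathsf{Diff}^{\infty}(M)$ is tangent to $H$; (b) a positive definite Finsler manifold of non-zero constant curvature with $\dim M>2$ has compact Lie holonomy group if and only if it is Riemannian; (c) such a non-Riemannian manifold has a holonomy group that is not the holonomy group of any Riemannian manifold. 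What you have outlined instead is a Nomizu-type correspondence between $G$-invariant affine connections on a reductive homogeneous space $G/H$ and $\mathrm{Ad}(H)$-equivariant bilinear maps $\alpha\colon \mathfrak{m}\times\mathfrak{m}\to\mathfrak{m}$, together with the resulting torsion and curvature formulas. No such theorem appears in the paper: there is no transitive group action, no reductive decomposition $\mathfrak{g}=\mathfrak{h}\oplus\mathfrak{m}$, and no classification of invariant connections anywhere in the text. (The preamble does define macros $\mathfrak{g},\mathfrak{h},\mathfrak{m},\mathrm{Ad},\mathrm{ad}$, but they are unused leftovers; they evidently misled you into reconstructing a statement from a different subject.)

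The gap is therefore total rather than local: none of the objects your argument manipulates --- the base point $o=eH$, fundamental vector fields, the isotropy representation, well-definedness over coset representatives --- are defined in, or relevant to, the theorems at hand. The paper's actual proofs are of a different nature entirely: tangency statements are established through $C^\infty$-differentiable $k$-parameter families of diffeomorphisms of the indicatrix $\mathfrak{I}_xM$ and commutator identities for such families (Lemmas 2--5 and Theorem 2); the dimension estimate $\dim\mathfrak{R}_x>\frac{n(n-1)}{2}$ for the curvature algebra at a non-(semi-)Riemannian point is a pointwise computation with the constant-curvature form of the curvature tensor, showing that equality would force $g_{bd}=\lambda_{bd}$ and hence semi-Euclidean structure; and the compactness theorem combines this estimate with the classical bound $\frac{n(n-1)}{2}$ on the dimension of the isometry group of an $(n-1)$-dimensional Riemannian manifold. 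Even the one place where a Lie group does appear --- the left-invariant Berwald-Mo\'or metric on the Heisenberg group in the Appendix --- is treated by direct computation of curvature vector fields and their brackets, not by any invariant-connection correspondence. To produce a proof of any of the paper's theorems you would need to abandon the homogeneous-space framework and work with the holonomy group as a subgroup of $\mathsf{Diff}(\mathfrak{I}_xM)$, which is where all the substance of the paper lies.
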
}
\newcommand{\bd}{\begin{definition}}
\newcommand{\ed}{\end{definition}}
\newcommand{\bc}{\begin{corollary}}
\newcommand{\ec}{\end{corollary}}
\newcommand{\bs}{\begin{proposition}}
\newcommand{\es}{\end{proposition}}
\newcommand{\bl}{\begin{lemma}}
\newcommand{\el}{\end{lemma}}
\newcommand{\bp}{\begin{proof}\;}
\newcommand{\ep}{\end{proof}}
\newcommand{\be}{\begin{equation}}
\newcommand{\eeq}{\end{equation}}
\newcommand{\ul}{\underline}

\title{Finsler manifolds with non-Riemannian holonomy}

\author{Zolt\'an Muzsnay and P\'eter T. Nagy}

\date{Institute of Mathematics, University of Debrecen\\
  H-4010 Debrecen, Hungary, P.O.B. 12
  \\
  \bigskip
  {\it E-mail}: {\tt {}muzsnay@math.unideb.hu}, {\tt
    {}nagypeti@math.unideb.hu}}
\maketitle

\footnotetext{2000 {\em Mathematics Subject Classification:} 53B40, 53C29}
\footnotetext{{\em Key words and phrases:} Finsler geometry, holonomy.}
\footnotetext{This research was supported by the Hungarian Scientific
  Research Fund (OTKA) Grant K 67617.}

%%%%%%%%%%%%%%%%%%%%%%%%%%%%%%%%%%%%%%%%%%%%%%%%%%%%%%%%%%%%%%%%%%%%%

\begin{abstract}
  The aim of this paper is to show that holonomy properties of Finsler
  manifolds can be very different from those of Riemannian
  manifolds. We prove that the holonomy group of a positive definite
  non-Riemannian Finsler manifold of non-zero constant curvature with
  dimension $>2$ cannot be a compact Lie group.  Hence this holonomy
  group does not occur as the holonomy group of any Riemannian
  manifold.  In addition, we provide an example of left invariant
  Finsler metric on the Heisenberg group, so that its holonomy group
  is not a (finite dimensional) Lie group. These results give a
  positive answer to the following problem formulated by S. S. Chern
  and Z. Shen: {\em Is there a Finsler manifold whose holonomy group
    is not the holonomy group of any Riemannian manifold?}
\end{abstract}

%%%%%%%%%%%%%%%%%%%%%%%%%%%%%%%%%%%%%%%%%%%%%%%%%%%%%%%%%%%%

\section{Introduction}

The notion of the holonomy group of a Riemannian manifold can be
generalized very naturally for a Finsler manifold (cf. e.g. S. S.
Chern and Z. Shen, \cite{ChSh}, Chapter 4): it is the group at a point
$x$ generated by the canonical homogeneous (nonlinear) parallel
translations along all loops emanated from $x$. Until now the holonomy
groups of non-Riemannian Finsler manifolds have been described only in
special cases: for Berwald manifolds there exist Riemannian metrics
with the same holonomy group (cf. Z. I. Szab\'o, \cite{Sza}), for
positive definite Landsberg manifolds the holonomy groups are compact
Lie groups consisting of isometries of the indicatrix with respect to
an induced Riemannian metric (cf. L. Kozma, \cite{Koz1},
\cite{Koz2}). A thorough study of the holonomy group of homogeneous
(nonlinear) connections was initiated by W. Barthel in his basic work
\cite{Bar} in 1963; he gave a construction for a holonomy algebra of
vector fields on the tangent space.  A general setting for the study
of infinite dimensional holonomy groups and holonomy algebras of
nonlinear connections was initiated by P. Michor in
\cite{Mic1}. However the introduced holonomy algebras could not be
used to estimate the dimension of the holonomy group since their
tangential properties to the holonomy group were not clarified.
\\[1ex]\indent The aim of this paper is to show that holonomy
properties of Finsler manifolds can be very different from those of
Riemannian manifolds. We prove that if the holonomy group of a
non-Riemannian Finsler manifold of non-zero constant curvature with
dimension $n>2$ is a (finite dimensional) Lie group then its dimension
is strictly greater than the dimension of the orthogonal group acting
on the tangent space and hence it can not be a compact Lie group.  An
estimate for the dimension of the holonomy group will be obtained by
investigation of a Lie algebra of tangent vector fields on the
indicatrix, algebraically generated by curvature vector fields of the
Finsler manifold. We call this Lie algebra the \textit{curvature
  algebra} and prove that its elements are tangent to one-parameter
families of diffeomorphisms contained in the holonomy group. For
non-Riemannian Finsler manifolds of constant curvature $\ne 0$ with
dimension $n>2$ we construct more than $\frac{n(n-1)}{2}$ linearly
independent curvature vector fields. \\[1ex]\indent In addition, we
provide an example of a left invariant singular (non $y$-global) Finsler metric of
Berwald-Mo\'or-type on the Heisenberg group which has infinite
dimensional curvature algebra and hence its holonomy is not a (finite
dimensional) Lie group.  These results give a positive answer to the
following problem formulated by S. S. Chern and Z. Shen in \cite{ChSh}
(p. 85): {\em Is there a Finsler manifold whose holonomy group is not
  the holonomy group of any Riemannian manifold?} This question is
contained also in the list of open problems in Finsler geometry by
Z. Shen \cite{Shen2}, (March 8, 2009, Problem 34).

\section{Preliminaries}
\subsubsection*{Finsler manifold and its canonical connection}

A \emph{Minkowski functional} on a vector space $V$ is a continuous
function $\mathcal F$, positively homogeneous of degree two,
i.e.~$\mathcal F(\lambda y)=\lambda^2 \mathcal F(y)$, smooth on $\hat
V :=V\setminus \{0\}$, and for any $y\in \hat V$ the symmetric
bilinear form $g_y\colon V \times V \to \mathbb R$ defined by
\begin{displaymath}
  g_y \colon (u,v)\ \mapsto \ g_{ij}(y)u^iv^j=\frac{1}{2}
  \frac{\partial^2 \mathcal F(y+su+tv)}{\partial s\,\partial
    t}\Big|_{t=s=0}
\end{displaymath}
is non-degenerate.  If $g_y$ is positive definite for any $y\in \hat
V$ then $\mathcal F$ is said positive definite and $(V,\mathcal F)$ is
called \emph{positive definite Minkowski space}.  A Minkowski
functional $\mathcal F$ is called \emph{semi-Euclidean} if there
exists a symmetric bilinear form $\langle \,, \, \rangle$ on $V$ such
that $g_y(u,v)=\langle u, v \rangle$ for any $y\in \hat V$ and $u,v\in
V$. A semi-Euclidean positive definite Minkowski functional is called
\emph{Euclidean}.  \\[1ex]\indent A \emph{Finsler manifold} is a pair
$(M,\mathcal F)$ where $M$ is an $n$-dimensional manifold and
$\F\colon TM \to \R$ is a function (called \emph{Finsler metric},
cf.~\cite{Shen1}) defined on the tangent bundle of $M$, smooth on
$\hat T M := TM\setminus\! \{0\}$ and its restriction ${\mathcal
  F}_x={\mathcal F}|_{_{T_xM}}$ is a Minkowski functional on $T_xM$
for all $x\in M$. If the restriction ${\mathcal F}_x={\mathcal
  F}|_{_{T_xM}}$ of the Finsler metric $\F\colon TM \to \R$ is
positive definite on $T_xM$ for all $x\in M$ then $(M, \mathcal F)$ is
called \emph{positive definite} Finsler manifold. A point $x\in M$ is
called \emph{(semi-)Riemannian} if the Minkowski functional $\F_x$ is
(semi-)Euclidean.
\\[1ex]
\indent We remark that in many applications the metric $\mathcal F$ is
smooth only on an open cone $\mathcal C M \! \subset\! TM \!
\setminus\! \{0\}$, where $\mathcal C M \! =\! \cup_{x\in M} \mathcal
C_x M$ is a fiber bundle over $M$ such that each $\mathcal C_xM$ is an
open cone in $T_xM \!  \setminus \! \{0\}$. In such case $(M, \mathcal
F)$ is called \emph{singular} (or \emph{non $y$-global}) Finsler space
(cf.~\cite{Shen1}).
\\[1ex]
\indent \emph{Geodesics} of Finsler manifolds are determined by a
system of 2nd order ordinary differential equation:
 \begin{displaymath}
   \ddot{x}^i + 2 G^i(x,\dot x)=0, \quad i = 1,...,n
 \end{displaymath}
where $G^i(x,\dot x)$ are locally given by
\begin{displaymath}
  G^i(x,y):= \frac{1}{4}g^{il}(x,y)\Big(2\frac{\partial
    g_{jl}}{\partial x^k}(x,y) -\frac{\partial g_{jk}}{\partial
    x^l}(x,y) \Big) y^jy^k.
\end{displaymath}
The associated homogeneous (nonlinear) parallel translation can be 
defined as follows: a vector field $X(t)=X^i(t)\frac{\partial}{\partial x^i}$ 
along a curve $c(t)$ is said to be parallel if it satisfies
\begin{equation} 
  \label{eq:nabla}
  \nabla_{\dot c} X (t):=\Big(\frac{d X^i(t)}{d t}+ \Gamma^i_j(c(t),X(t))\dot c^j(t)
  \Big)\frac{\partial}{\partial x^i},
\end{equation}
where $\Gamma^i_j=\frac{\partial G^i}{\partial y^j}$.

\subsubsection*{Horizontal distribution, curvature}

The geometric structure associated to $\nabla$ can be given on $TM$ in
terms of the horizontal distribution.  Let ${\mathcal V}TM \!
\subset\! TTM$ denote the vertical distribution on $TM$, ${\mathcal
  V}_yTM:= \mathrm{Ker} \, \pi_{*,y}$.  The horizontal distribution
${\mathcal H}TM \!  \subset\! TTM$ associated to (\ref{eq:nabla}) is
locally generated by the vector fields
\begin{equation}
  \label{eq:lift}
  l_{(x,y)}\Big(\frac{\partial}{\partial x^i}\Big):=
  \frac{\partial}{\partial x^i}
  +\Gamma_i^k(x,y)\frac{\partial}{\partial y^k},\quad i=1,\dots ,n.
\end{equation}
For any $y\in TM$ we have the decomposition $T_yTM = {\mathcal H}_yTM \oplus {\mathcal
  V}_yTM$. The projectors corresponding to this decomposition will be
denoted by $h_y$ and $v_y$.  The isomorphism $l_{(x,y)}:T_xM \to
{\mathcal H}_yTM$ defined by the formula (\ref{eq:lift}) is called
\emph{horizontal lift}. Then a vector field $X(t)$ along a curve
$c(t)$ is parallel if and only if it is a solution of the differential
equation
\begin{equation}
  \label{eq:par_lift}
  \frac{d}{dt}X(t)=l_{X(t)}(\dot c(t)).
\end{equation}
The \emph{curvature tensor} field characterizes the integrability of
the horizontal distribution:
\begin{equation}
  \label{eq:R_1}
  R_{(x,y)}(\xi, \eta):=v[h \xi, h \eta], \qquad \xi, \eta\in
  T_{(x,y)}TM.
\end{equation}
 Using local coordinate system we have
\begin{displaymath}
  R_{(x,y)} = \left( \frac{\partial \Gamma^k_i}{\partial x^j} -
    \frac{\partial \Gamma^k_j}{\partial x^i} + \Gamma_i^m
    \frac{\partial \Gamma^k_j}{\partial y^m} - \Gamma_j^m
    \frac{\partial \Gamma^k_i}{\partial y^m} \right)dx^i\otimes dx^j
  \otimes\frac{\partial}{\partial y^k}.
\end{displaymath}
The manifold is called of constant curvature $c\in\R$, if for any
$x\in M$ the local expression of the curvature is
\begin{equation}\label{gorb}
  R_{(x,y)} = c\left(\delta_i^kg_{jm}(y)y^m - \delta_j^kg_{im}(y)y^m
  \right)dx^i\otimes dx^j \otimes \frac{\partial}{\partial y^k}.
\end{equation}
In this case the flag curvature of the Finsler manifold
(cf.~\cite{ChSh}, Section 2.1 pp. 43-46) does not depend either on the
point or on the 2-flag.

\subsubsection*{Indicatrix bundle} 

Let $(M,\mathcal F)$ be an $n$-dimensional Finsler manifold.  The
\emph{indicatrix} $\I_xM$ at $x \in M$ is a hypersurface of $T_xM$
defined by
\begin{displaymath}
  \I_xM:= \{y \in T_xM ; \ \mathcal F(y) = \pm 1\}.
\end{displaymath}
If the Finsler manifold $(M,\mathcal F)$ is positive definite then the
indicatrix ${\I}_xM$ is a compact hypersurface in the tangent space
$T_xM$, diffeomorphic to the standard $(n-1)$-sphere. In this case the
group ${\mathsf {Diff}}({\I}_xM)$ of all smooth diffeomorphisms of
${\I}_xM$ is a regular infinite dimensional Lie group modeled on the
vector space ${\mathfrak X}({\I}_xM)$ of smooth vector fields on
${\I}_xM$.  The Lie algebra of the infinite dimensional Lie group
${\mathsf{Diff}}(\I_xM)$ is the vector space ${\mathfrak X}({\I}_xM)$,
equipped with the negative of the usual Lie bracket, (c.f. A. Kriegl
and P. W. Michor \cite{KrMi}, Section 43).
\\
Let $(\I M,\pi,M)$ denote the \emph {indicatrix bundle} of
$(M,\mathcal F)$ and $i:\I M \hookrightarrow T M$ the natural
embedding of the indicatrix bundle into the tangent bundle
$(TM,\pi,M)$.

\subsubsection*{Parallel translation and holonomy}

Let $(M, \F)$ be a Finsler manifold. The \emph{parallel translation}
$\tau_{c}:T_{c(0)}M\to T_{c(1)}M$ along a curve $c:[0,1]\to \R$ is
defined by vector fields $X(t)$ along $c(t)$ which are solutions of
the differential equation (\ref{eq:nabla}). Since
$\tau_{c}:T_{c(0)}M\to T_{c(1)}M$ is a differentiable map between
$\hat T_{c(0)}M$ and $\hat T_{c(1)}M$ preserving the value of the
Finsler metric, it induces a map
\begin{equation}
  \label{eq:tau_ind}
  \tau^\I_{c}\colon \I_{c(0)}M \longrightarrow \I_{c(1)}M
\end{equation}
between the indicatrices. 
\begin{definition} 
  The \emph {holonomy group} $\mathsf{Hol}(x)$ of a Finsler space $(M,
  \F)$ at $x\in M$ is the subgroup of the group of diffeomorphisms
  ${\mathsf{Diff}}({\I}_xM)$ of the indicatrix ${\I}_xM$ determined by
  parallel translation of ${\I}_xM$ along piece-wise differentiable
  closed curves initiated at the point $x\in M$.
\end{definition}
We note that the holonomy group $\mathsf{Hol}(x)$ is a topological
subgroup of the regular infinite dimensional Lie group ${\mathsf
{Diff}}(\I_xM)$, but its differentiable structure is not known in general.

\section{Tangent Lie algebras to subgroups of $\mathsf{Diff}^{\infty}(M)$}

Let $H$ be a subgroup of the diffeomorphism group
$\mathsf{Diff}^{\infty}(M)$ of a differentiable manifold $M$ and let
${\mathfrak X}^{\infty}(M)$ be the Lie algebra of smooth vector fields
on $M$.  
\bd 
A vector field $X\!\in\! {\mathfrak X}^{\infty}(M)$ is
called \emph{strongly tangent} to $H$, if there exists a ${\mathcal
  C}^{\infty}$-differentiable $k$-parameter family
$\{\phi_{(t_1,\dots,t_k)}\in H\}_{t_i\in (-\varepsilon,\varepsilon)}$
of diffeomorphisms such that
\begin{enumerate}[(i)]
  \itemsep=1pt
\item $\phi_{(t_1,\dots,t_k)}=\mathsf{Id}$, if $t_j=0$ for some $1
  \leq j\leq k;$
\item $\frac{\partial^k\phi_{(t_1,\dots,t_k)}}{\partial
    t_1\cdots\partial t_k}\big|_{(t_1,\dots,t_k)=(0,\dots,0)}=X$.
\end{enumerate}
A vector field $X\!\in\! {\mathfrak X}^{\infty}(M)$ is called
\emph{tangent} to $H$, if there exists a ${\mathcal
  C}^1$-differentiable $1$-parameter family $\{\phi_t\in H\}_{t\in
  (-\varepsilon,\varepsilon)}$ of diffeomorphisms of $M$ such that
$\phi_{0}=\mathsf{Id}$ and \( \frac{\partial\phi_t}{\partial
  t}\big|_{t=0}=X.\)
\\
A Lie subalgebra $\mathfrak g$ of ${\mathfrak X}^{\infty}(M)$ is
called \emph{tangent} to $H$, if all elements of $\mathfrak g$ are
tangent vector fields to $H$.
\ed 
\bt 
\label{liealg} 
Let $\mathcal V$ be a set of vector fields strongly tangent to the
subgroup $H$ of $\mathsf{Diff}^{\infty}(M)$. The Lie subalgebra
$\mathfrak v$ of ${\mathfrak X}^{\infty}(M)$ generated by $\mathcal V$
is tangent to $H$.  \et
\begin{proof} 
  First, we investigate some properties of vector fields strongly
  tangent to the group $H$.
  \begin{lemma} 
    \label{-1} 
    Let \(\{\psi_{(t_1,...,t_h)}\in
    \mathsf{Diff}^{\infty}(U)\}_{t_i\in (-\varepsilon,
      \varepsilon)}\)\, be a ${\mathcal C}^{\infty}$-differentiable
    $h$-para\-meter family of (local) diffeomorphisms on a neighbourhood
    $U\subset{\mathbb R}^n$, satisfying
    $\psi_{(t_1,\dots,t_h)}=\mathsf{Id}$, if $t_j=0$ for some $1 \leq
    j\leq h$. Then
\begin{enumerate}
\item[\em{(i)}]\label{item_1}
  $\displaystyle{\frac{\partial^{i_1+...+i_h}\psi_{(t_1,...,t_h)}}{\partial
      t_1^{i_1}\;...\;\partial t_h^{i_h}}
    \Bigg|_{(0,...,0)}(x)=0,}\quad \text{if} \quad i_p=0 \quad
  \text{for some} \quad 1 \leq p\leq h$;
\item[\em{(ii)}]
  $\displaystyle{\frac{\partial^h(\psi_{(t_1,...,t_h)})^{-1}}{\partial
      t_1\;...\;\partial t_h} \Big|_{(0,...,0)}(x)=
    -\frac{\partial^h\psi_{(t_1,...,t_h)}}{\partial t_1\;...\;\partial
      t_h}\Big|_{(0,...,0)}}(x)$;
\item[\em{(iii)}]
  $\displaystyle{\frac{\partial^h\psi_{(t_1,...,t_h)}}{\partial
      t_1\;...\;\partial
      t_h}\Big|_{(0,...,0)}(x)=\frac{\partial\psi_{\sqrt[h]{t},\dots,\sqrt[h]{t})}}{\partial
      t}\big|_{t=0}(x)}$
\end{enumerate}
at any point $x\in U$.
\end{lemma}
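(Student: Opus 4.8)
The plan is to establish the three parts in the order (i), (iii), (ii), since (i) is the engine and (ii) then drops out of (iii). For (i), I would first observe that, as $i_p=0$, the operator $\partial_{t_1}^{i_1}\cdots\partial_{t_h}^{i_h}$ involves no $t_p$-derivative; since the total order $i_1+\dots+i_h$ is at least $1$, some $i_q$ with $q\ne p$ is positive. Differentiation in the variables $t_q$ ($q\ne p$) commutes with restriction to the hyperplane $\{t_p=0\}$, on which the hypothesis gives $\psi_{(t_1,\dots,t_h)}=\mathsf{Id}$, the map constant in the remaining variables; differentiating a constant map at least once gives $0$, proving (i). (Throughout one reads $i_1+\dots+i_h\ge1$; the zeroth-order case is just $\psi_0=\mathsf{Id}$.)

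For (iii) I would Taylor-expand $\psi_{(t_1,\dots,t_h)}(x)$ about the origin to order $h$ with remainder. By (i) every coefficient whose multi-index has a vanishing entry is zero, so all terms of total degree $<h$ disappear and, in degree $h$, only the monomial belonging to $(1,\dots,1)$ survives:
\[
\psi_{(t_1,\dots,t_h)}(x)=x+\frac{\partial^h\psi}{\partial t_1\cdots\partial t_h}\Big|_0(x)\,t_1\cdots t_h+R(t,x),
\]
with $R(t,x)=O(|t|^{h+1})$ for fixed $x$ by smoothness. Setting $t_1=\dots=t_h=\sqrt[h]{t}$ makes $t_1\cdots t_h=t$ and $|t|=\sqrt{h}\,t^{1/h}$, whence $R/t=O(t^{1/h})\to0$, and the difference quotient in $t$ converges to the claimed value. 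I expect this to be the main obstacle: the substitution $t\mapsto\sqrt[h]{t}$ is not differentiable at $0$, so the chain rule is unavailable and one must instead bound the Taylor remainder after the fractional substitution, exactly as above. (For even $h$ one takes $\sqrt[h]{t}$ on $t\ge0$ and reads (iii) as a one-sided derivative.)

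Finally, for (ii) I would apply (iii) twice. The family $\psi^{-1}$ satisfies the same hypotheses, because $\psi_{(t_1,\dots,t_h)}=\mathsf{Id}$ forces $(\psi_{(t_1,\dots,t_h)})^{-1}=\mathsf{Id}$ whenever some $t_j=0$. Writing $\phi_t:=\psi_{(\sqrt[h]{t},\dots,\sqrt[h]{t})}$, the inverse commutes with the substitution, so $\phi_t^{-1}=(\psi^{-1})_{(\sqrt[h]{t},\dots,\sqrt[h]{t})}$, and by (iii) both $\phi_t$ and $\phi_t^{-1}$ are differentiable at $t=0$ with $\phi_0=\mathsf{Id}$ and $D\phi_0=\mathrm{Id}$. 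Differentiating $\phi_t\circ\phi_t^{-1}=\mathsf{Id}$ at $t=0$ then gives $\frac{\partial\phi_t}{\partial t}|_0+\frac{\partial\phi_t^{-1}}{\partial t}|_0=0$, which by (iii) is precisely $\frac{\partial^h(\psi)^{-1}}{\partial t_1\cdots\partial t_h}|_0=-\frac{\partial^h\psi}{\partial t_1\cdots\partial t_h}|_0$. (Equivalently one could expand $\partial_{t_1}\cdots\partial_{t_h}[\psi(t,\psi^{-1}(t,x))]=0$ directly by the multivariate Faà di Bruno formula, where (i) annihilates all terms except the two full mixed-derivative contributions of $\psi$ and of $\psi^{-1}$; the route through (iii) is shorter.)
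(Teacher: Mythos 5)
Your proposal is correct, and for the substantive part (iii) it is essentially the paper's own argument: the paper writes $\psi_{(t_1,\dots,t_h)}(x)=x+t_1\cdots t_h\bigl(X(x)+\omega(x,t_1,\dots,t_h)\bigr)$ with $\omega\to 0$ (a factored form of your degree-$h$ Taylor expansion, justified by (i)) and then substitutes $t_i=\sqrt[h]{t}$; your version with the explicit remainder bound $R/t=O(t^{1/h})$ makes precise exactly the step the paper glosses over, namely that the substitution $t\mapsto\sqrt[h]{t}$ is not differentiable at $0$ so one must estimate the remainder rather than invoke the chain rule, and your remark about one-sided derivatives for even $h$ covers a point the paper does not mention. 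Where you genuinely diverge is (ii): the paper dismisses (i) and (ii) as ``direct computation'' (the intended computation being your parenthetical alternative, i.e. applying $\partial^h/\partial t_1\cdots\partial t_h$ to $\psi_{(t)}\circ\psi_{(t)}^{-1}=\mathsf{Id}$ at $0$, where (i) annihilates every term except the two full mixed derivatives), whereas you deduce (ii) from (iii) by differentiating $\phi_t\circ\phi_t^{-1}=\mathsf{Id}$ in the single parameter $t$. That route works but has one unstated step: since $t\mapsto\phi_t(y)$ is only once differentiable at $t=0$, differentiating the composition requires the expansion $\phi_t(y)=y+tX(y)+o(t)$ to hold locally uniformly in $y$ (together with continuity of $X$ and of $t\mapsto\phi_t^{-1}(x)$); pointwise differentiability of the two factors alone does not license the product/chain rule. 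This uniformity does follow from your own Taylor remainder --- its constants are locally uniform in $x$ by joint smoothness of the family --- but it should be stated. The direct computation in the variables $(t_1,\dots,t_h)$, by contrast, involves only smooth dependence and avoids the issue entirely, which is presumably why the paper takes that route; your version buys a shorter derivation at the cost of this extra regularity check.
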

\bp Assertions (i) and (ii) can be obtained by direct computation.  It
follows from (i) that $\frac{\partial^h\psi_{(t_1,...,t_h)}} {\partial
  t_1\;...\;\partial t_h}\Big|_{(0,...,0)}(x)$ is the first
non-necessarily vanishing derivative of the diffeomorphism family
$\{\psi_{(t_1,...,t_h)}\}$ at any point $x\in M$. Using
\begin{displaymath}
  \psi_{(t_1,\dots,t_k)}(x) = x + t_1\cdots t_k\left(X(x) + \omega
    (x,t_1,\dots,t_k)\right),
\end{displaymath}
where $\displaystyle \lim_{t_i\to 0} \omega (x,t_1,\dots,t_k) = 0$ we obtain, that
\begin{displaymath}
  \frac{\partial}{\partial t}\Big|_{t=0} \psi_{(\sqrt[k]{t}, \dots,
    \sqrt[k]{t})}(x) = \frac{\partial}{\partial t}\Big|_{t=0}\Big (x +
  t\big(X(x) + \omega (x,\sqrt[k]{t},\dots,\sqrt[k]{t})\big)\Big) =
  X(x),
\end{displaymath}
which proves (iii).\ep 
We remark that the assertion $(iii)$ means that any vector field strongly tangent to $H$ 
is tangent to $H$. \\
Now, we generalize a well-known relation between the commutator of vector fields 
and the commutator of their induced flows. 
\begin{lemma}\label{bra}
  Let $\{\phi_{(s_1,...,s_k)}\}$ and $\{\psi_{(t_1,...,t_l)}\}$ be
  ${\mathcal C}^{\infty}$-differentiable $k$-parameter, respectively
  \,$l$-parameter families of (local) diffeomorphisms defined on a
  neighbourhood $U\subset{\mathbb R}^n$. Assume that
  $\phi_{(s_1,\dots,s_k)}=\mathsf{Id}$, respectively
  $\psi_{(t_1,\dots,t_l)}=\mathsf{Id}$, if some of their variables
  equals $0$. Then the family of (local) diffeomorphisms
  $[\phi_{(s_1,...,s_k)},\psi_{(t_1,...,t_l)}]$ defined by the
  commutator of the group $\mathsf{Diff}^{\infty}(U)$ fulfills
  $[\phi_{(s_1,...,s_k)},\psi_{(t_1,...,t_l)}]=\mathsf{Id}$, if some
  of its variables equals $0$. Moreover
  \begin{displaymath}
    \frac{\partial^{k+l}[\phi_{(s_1...s_k)},\psi_{(t_1...t_l)}]}
    {\partial s_1\;...\;\partial s_k\;\partial t_1\;...\;\partial
      t_l} \Big|_{(0...0;
      0...0)}(x)=-\Bigg[\frac{\partial^k\phi_{(s_1...s_k)}}
    {\partial s_1\;...\;\partial
      s_k}\Big|_{(0...0)},\frac{\partial^l\psi_{(t_1...t_l)}}
    {\partial t_1\;...\; \partial t_l}\Big|_{(0...0)}\Bigg](x)
  \end{displaymath}
  at any point $x\in U$.
\end{lemma}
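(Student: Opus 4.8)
The plan is to exploit the same vanishing-of-partials mechanism used in Lemma~\ref{-1}, now applied to the commutator family itself, and then to reduce the computation of the single surviving derivative to the classical second-order expansion of a group commutator. Reading the commutator as $[\phi_{(s)},\psi_{(t)}]=\phi_{(s)}^{-1}\circ\psi_{(t)}^{-1}\circ\phi_{(s)}\circ\psi_{(t)}$, the first assertion is immediate: if some $s_p=0$ then $\phi_{(s)}=\phi_{(s)}^{-1}=\mathsf{Id}$ and the product collapses to $\psi_{(t)}^{-1}\circ\psi_{(t)}=\mathsf{Id}$, and symmetrically if some $t_q=0$. In particular the $(k+l)$-parameter family $C_{(s,t)}:=[\phi_{(s)},\psi_{(t)}]$ satisfies the hypotheses of Lemma~\ref{-1}, so by part~(i) every partial derivative of $C_{(s,t)}$ at the origin vanishes except possibly the total one $\partial_{s_1}\cdots\partial_{s_k}\partial_{t_1}\cdots\partial_{t_l}$. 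Equivalently, writing $u=s_1\cdots s_k$ and $v=t_1\cdots t_l$, every nonzero monomial of $C_{(s,t)}(x)-x$ is divisible by $uv$, and the quantity we must identify is exactly the coefficient of $uv$.

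Next I would Taylor-expand the fourfold composition. By the same application of Lemma~\ref{-1}(i) to $\phi$ and to $\psi$, their displacements have the form $\phi_{(s)}(x)=x+u\,(X(x)+o(1))$ and $\psi_{(t)}(x)=x+v\,(Y(x)+o(1))$, with $X,Y$ the two bracketed vector fields; by Lemma~\ref{-1}(ii) the inverses have displacements $-u\,(X(x)+o(1))$ and $-v\,(Y(x)+o(1))$. The key bookkeeping observation is that, since each factor $\phi^{\pm1}$ carries the whole product $u=s_1\cdots s_k$ and each factor $\psi^{\pm1}$ the whole product $v=t_1\cdots t_l$, the coefficient of the multilinear monomial $s_1\cdots s_k\,t_1\cdots t_l$ can only receive contributions that activate exactly one of the two $\phi$-factors and exactly one of the two $\psi$-factors, each at its leading order $\pm X$, $\pm Y$: activating both $\phi$-factors would force a factor $u^2$ (hence some $s_p^2$), and keeping a subleading term in the $s$-dependence would force an extra $s_p$, both annihilated by the single differentiation in each variable. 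Collecting the surviving terms through the chain rule reproduces verbatim the one-parameter flow expansion $\Phi_{-u}\circ\Psi_{-v}\circ\Phi_u\circ\Psi_v(x)=x+uv\,(DX\cdot Y-DY\cdot X)+\cdots$, whose $uv$-coefficient is $-[X,Y](x)$; reading off this coefficient then yields the asserted formula.

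I expect the main obstacle to be the rigorous justification of this bookkeeping: making precise that, among all terms produced by expanding the composition, exactly those activating one $\phi$- and one $\psi$-factor at leading order survive the total differentiation, and that the chain-rule cross-terms assemble into $DX\cdot Y-DY\cdot X$ with the correct global sign. This is conveniently organized by substituting $u=s_1\cdots s_k$ and $v=t_1\cdots t_l$ and tracking the composition only modulo terms that are of degree $\geq 2$ in some single $s_p$ or in some single $t_q$; the remaining, genuinely bilinear, part is then a finite, purely algebraic computation identical to the classical two-flow commutator case, and it is there that the overall minus sign is fixed.
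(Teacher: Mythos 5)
Your proposal is correct, and it rests on the same foundation as the paper's proof: both establish the first assertion by the same collapse argument, and both derive every cancellation from Lemma~\ref{-1}(i)--(ii). Where you genuinely differ is in how the surviving derivative is computed. The paper differentiates $\phi^{-1}_{(s)}\circ\psi^{-1}_{(t)}\circ\phi_{(s)}\circ\psi_{(t)}$ directly, in two chain-rule stages (all $t$-derivatives at $0$, then all $s$-derivatives by Leibniz), working with Jacobi operators and invoking Lemma~\ref{-1}(ii) at the end to produce the bracket; along the way it silently discards an $s$-independent term (legitimately, since $\partial_{s_1}\cdots\partial_{s_k}\big|_0$ kills it). You instead pass to $u=s_1\cdots s_k$, $v=t_1\cdots t_l$, identify the sought derivative with the coefficient of $uv$, and rerun the classical two-flow commutator expansion modulo killed terms. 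Your route localizes the sign question in the familiar formula $x+uv\,(DX\cdot Y-DY\cdot X)$ and never meets the dropped term; its cost is the bookkeeping you flag, which is genuine but routine: Hadamard's lemma (already implicit in the paper's proof of Lemma~\ref{-1}(iii)) gives smooth factorizations $\phi_{(s)}(x)=x+u\,A(s,x)$ with $A(0,x)=X(x)$, and every discarding step reduces to the identity that $uv\,g(s,t,x)$ and $uv\,g(0,0,x)$ have the same full mixed derivative at $0$. Two precision points: Lemma~\ref{-1}(i) kills only those partials missing some variable, not ``every partial except the total one'' as you write (higher-order partials involving all variables may survive) -- but the divisibility-by-$uv$ statement you actually use is the correct one; and your case analysis should also record that terms activating no $\phi$-factor (resp.\ no $\psi$-factor) are independent of the $s$ (resp.\ $t$) variables and die for that reason, which completes the claim that exactly one factor of each type must be activated.
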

\begin{proof} 
  The group theoretical commutator
  $\big[\phi_{(s_1,...,s_k)},\psi_{(t_1,...,t_l)}\big]$ of the
  families of diffeomorphisms satisfies
  $[\phi_{(s_1,...,s_k)},\psi_{(t_1,...,t_l)}]=\mathsf{Id}$, if some
  of its variables equals $0$. Hence
  \begin{displaymath}
    \displaystyle\frac{\partial^{i_1+...+i_k+j_1+...+j_l}[\phi_{(s_1,...,s_k)},
      \psi_{(t_1,...,t_l)}]}{\partial s_1^{i_1}\;...\;\partial
      s_k^{i_k} \partial t_1^{j_1}\;...\;\partial
      t_l^{i_l}}\Big|_{(0,...,0;0,...,0)}=0,
  \end{displaymath}
  if $i_p\!=\!0$ or $j_q\!=\!0$ for some index $1\!\leq \!p\!\leq\! k$ or $1 \! \leq \!
  q\! \leq\! l$.  The families of diffeomorphisms
  $\{\phi_{(s_1,...,s_l)}\}$, $\{\psi_{(t_1,...,t_l)}\}$,
  $\{\phi_{(s_1,...,s_l)}^{-1}\}$ and $\{\psi_{(t_1,...,t_l)}^{-1}\}$
  are the constant family $\mathsf{Id}$, if some of their variables
  equals $0$. Hence one has
\begin{alignat}{1}
  \label{comm}
  & \frac{\partial^{k+l}[\phi_{(s_1...s_k)},\psi_{(t_1...t_l)}]}
  {\partial s_1\;...\;\partial s_k\;\partial t_1\;...\;\partial t_l}
  \Big|_{(0,...,0;\; 0,...,0)}(x)=
  \\
  \notag & =\frac{\partial^k}{\partial s_1...\partial
    s_k}\Big|_{(0...0)} \Bigg(\frac{\partial^l
    \Big(\phi_{(s_1...s_k)}^{-1} \! \circ \psi_{(t_1...t_l)}^{-1} \!
    \circ \phi_{(s_1...s_k)} \! \circ \psi_{(t_1...t_l)}(x)\Big)}
  {\partial t_1...\partial t_l}\Big|_{(0...0)}\Bigg)
  \\
  \notag & = \frac{\partial^k}{\partial s_1...\partial
    s_k}\Big|_{(0...0)}\left(
    d(\phi_{(s_1...s_k)}^{-1})_{\phi_{(s_1...s_k)}(x)}
    \frac{\partial^l\psi_{(t_1...t_l)}^{-1}} {\partial t_1...\partial
      t_l}\Bigg|_{(0,...,0)}
    \!\!\!\!\!\!(\phi_{(s_1...s_k)}(x))\right),
\end{alignat}
where $d\big(\phi_{(s_1,...,s_k)}^{-1}\big)_{\phi_{(s_1,...,s_k)}(x)}$
denotes the Jacobi operator of the map $\phi_{(s_1,...,s_k)}^{-1}$ at
the point $\phi_{(s_1,...,s_k)}(x)$.  Using the fact, that
$\{\phi_{(s_1,...,s_k)}\}$ is the constant family $\mathsf{Id}$, if
some of its variables equals $0$, and the relation
$d(\phi_{(0,...,0)}^{-1})_{\phi_{(s_1,...,s_k)}(x)}=\mathsf{Id}$, we
obtain that (\ref{comm}) can be written as
{\small
\begin{displaymath}
  d\Big(\frac{\partial^k\phi_{(s_1...s_k)}^{-1}}{\partial
    s_1...\partial s_k}\Big|_{(0...0)}\Big)_x
  \!\!  \frac{\partial^l\psi_{(t_1...t_l)}^{-1}(x)}{\partial
    t_1...\partial t_l}\Big|_{(0...0)} \!\! +
  d\Big(\frac{\partial^l\psi_{(t_1...t_l)}^{-1}}{\partial
    t_1...\partial t_l}\Big|_{(0...0)}\Big)_x
  \frac{\partial^k\phi_{(s_1...s_k)}(x)}{\partial s_1...\partial
    s_k}\Big|_{(0,...,0)}.
\end{displaymath}
}According to assertion (ii) of Lemma \ref{-1} the last formula gives  
{\small
\begin{displaymath}
  d\Big(\frac{\partial^k\phi_{(s_1...s_k)}}{\partial
    s_1\;...\ \partial s_k}\Big|_{(0...0)}\Big)_x
  \!\! \frac{\partial^l\psi_{(t_1...t_l)}(x)}{\partial t_1\;...\; \partial
    t_l}\Big|_{(0...0)}\!\! - d\Big(\frac{\partial^l
    \psi_{(t_1...t_l)}} {\partial t_1\;...\; \partial
    t_l}\Big|_{(0...0)} \Big)_x \frac{\partial^k
    \phi_{(s_1...s_k)}(x)} {\partial s_1\;...\;\partial
    s_k}\Big|_{(0...0)},
\end{displaymath}
}which is the Lie bracket of vector fields 
\begin{displaymath}
  \left[{{\frac{\partial^l\psi_{(t_1,...,t_l)}}{\partial
          t_1\;...\;\partial t_l}\Big|_{(0,...,0)}}, \;
      \frac{\partial^k\phi_{(s_1,...,s_k)}} {\partial
        s_1\;...\;\partial s_k}\Big|_{(0,...,0)}}\right]:U\to {\mathbb
    R}^n.
\end{displaymath}
\hphantom{.}\end{proof} 
\bl The Lie algebra $\mathfrak v$  has a basis
consisting of vector fields strongly tangent to the group $H$.  
\el \bp The
iterated Lie brackets of vector fields belonging to $\mathcal V$
linearly generate the vector space $\mathfrak v$. It
follows from Lemma \ref{bra} that iterated Lie brackets of vector
fields belonging to $\mathcal V$ are strongly tangent to the group
$H$. Hence $\mathfrak v$ is linearly generated by
vector fields strongly tangent to $H$. \ep 
\bl \label{lincom}
Linear combinations of vector fields tangent to $H$ are tangent to
$H$. \el \bp If $X$ and $Y$ are vector fields tangent to $H$ then
there exist ${\mathcal C}^1$-differenti\-able $1$-parameter families of
diffeomorphisms $\{\phi_t\in H\}$ and $\{\psi_t\in H\}$ such that
\[\phi_0 \!=\!\psi_0\!=\!\mathsf{Id}, \qquad  
\frac{\partial}{\partial t}\Big|_{t=0}\phi_t = X, \qquad
\frac{\partial}{\partial t}\Big|_{t=0}\psi_t = Y.\] Considering the
${\mathcal C}^1$-differentiable $1$-parameter families of
diffeomorphisms $\{\phi_t \circ \psi_t\}$ and $\{\phi_{ct}\}$ one has
\[X + Y = \frac{\partial}{\partial t}\Big|_{t=0}(\phi_t \circ
\psi_t),\quad \quad c\,X = \frac{\partial}{\partial
  t}\Big|_{t=0}\phi_{(c\,t)}, \quad \text{for all} \quad c\in {\mathbb
  R}^n,\] which proves the assertion.\ep Lemmas
\ref{-1}\;--\;\ref{lincom} prove  Theorem \ref{liealg}. \end{proof}

\section{Curvature algebra}

\begin{definition} 
  A vector field $\xi\in {\mathfrak X}({\I}_xM)$ on the indicatrix
  $\I_xM$ is called a \emph{curvature vector field} of the Finsler
  manifold $(M, \F)$ at $x\in M$, if there exists $X, Y\in T_xM$ such
  that $\xi = r_x(X,Y)$, where
  \begin{equation}
    \label{eq:r}
    r_x(X,Y)(y):=R_{(x,y)}(l_yX,l_yY)
  \end{equation}
  The Lie subalgebra
  \begin{math}
    {\mathfrak R}_x \! := \! \big\langle \, r_x(X,Y); \;X, Y \!\in\!
    T_xM \, \big\rangle
  \end{math}
  of ${\mathfrak X}({\I}_xM)$ generated by the curvature vector fields
  is called the \emph {curvature algebra} of the Finsler manifold $(M,
  \F)$ at the point $x\in M$.
\end{definition}
Since the Finsler metric is preserved by parallel translations, its
derivatives with respect to horizontal vector fields are identically
zero. Using (\ref{eq:R_1}) we obtain, that the derivative of the Finsler metric 
with respect to (\ref{eq:r}) vanishes, and hence 
\begin{displaymath}
  g_{(x,y)}\big(y,R_{(x,y)}(l_yX ,l_yY)\big)=0, \quad \text{for any}
  \quad y,X,Y\in T_xM
\end{displaymath} 
(c.f.\,\cite{Shen1}, eq.\,(10.9)). This means that the curvature vector
fields $\xi\!=\!r_x(X,Y)$ are tangent to the indicatrix.  In the sequel we
investigate the tangential properties of the curvature algebra to the
holonomy group of the canonical connection $\nabla $ of a Finsler
manifold.

\begin{proposition}
  \label{curvat} 
  Any curvature vector field at $x\!\in\! M$ is strongly tangent to
  the holonomy group $\mathsf{Hol}(x)$.
\end{proposition}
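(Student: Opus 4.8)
The plan is to realise $r_x(X,Y)$ as the second-order infinitesimal holonomy around a coordinate parallelogram at $x$, reducing the computation to Lemma \ref{bra} together with the definition (\ref{eq:R_1}) of the curvature. First I would fix $X,Y\in T_xM$, choose a chart centred at $x$, and extend $X,Y$ to the constant-coefficient vector fields $\tilde X,\tilde Y$ on a neighbourhood of $x$; these commute, hence so do their local flows on $M$. Since parallel translation preserves $\F$, the horizontal lifts $l(\tilde X),l(\tilde Y)$ are tangent to the indicatrix bundle $\I M$; let $\Phi_s,\Psi_t$ be their local flows on $\I M$. By the characterisation (\ref{eq:par_lift}) of parallel vector fields, $\Phi_s$ covers the flow of $\tilde X$ and acts on each fibre by parallel translation along the corresponding $\tilde X$-integral curve, and similarly for $\Psi_t$.

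Next I would form the group commutator $\sigma_{(s,t)}:=\Phi_s^{-1}\circ\Psi_t^{-1}\circ\Phi_s\circ\Psi_t$ of local diffeomorphisms of $\I M$. Because $\tilde X$ and $\tilde Y$ commute, the underlying loop in $M$ is closed at $x$, so $\sigma_{(s,t)}$ maps the fibre $\I_xM$ onto itself; its restriction $\phi_{(s,t)}:=\sigma_{(s,t)}|_{\I_xM}$ is then the parallel translation around this piecewise-smooth loop based at $x$, whence $\phi_{(s,t)}\in\mathsf{Hol}(x)$. If $s=0$ or $t=0$ one of the flows is the identity and the commutator collapses to $\mathsf{Id}$, giving property (i) of strong tangency with $k=2$.

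It then remains to compute the mixed derivative. Applying Lemma \ref{bra} to the one-parameter families $\Phi_s,\Psi_t$, whose derivatives at $0$ are $l(\tilde X),l(\tilde Y)$, yields
\[
\frac{\partial^2\sigma_{(s,t)}}{\partial s\,\partial t}\Big|_{(0,0)}=-\big[l(\tilde X),l(\tilde Y)\big].
\]
Since $\tilde X,\tilde Y$ commute, $\pi_*[l(\tilde X),l(\tilde Y)]=[\tilde X,\tilde Y]=0$, so this bracket is vertical, and by (\ref{eq:R_1}) it equals $v[l(\tilde X),l(\tilde Y)]$, whose value at $(x,y)$ is $R_{(x,y)}(l_yX,l_yY)=r_x(X,Y)(y)$. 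As differentiation in $(s,t)$ commutes with restriction to the fixed submanifold $\I_xM$, I obtain $\frac{\partial^2\phi_{(s,t)}}{\partial s\,\partial t}\big|_{(0,0)}=-r_x(X,Y)$, which is property (ii); since $R$ is antisymmetric, $-r_x(X,Y)=r_x(Y,X)$ is again a curvature vector field, so every curvature vector field is strongly tangent to $\mathsf{Hol}(x)$.

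The step I expect to require the most care is the geometric reduction in the middle: identifying the parallel translation around the loop with the restriction of the flow-commutator, and verifying that $\{\phi_{(s,t)}\}$ is a genuine smooth two-parameter family of elements of $\mathsf{Hol}(x)$ to which the algebraic Lemma \ref{bra} legitimately applies. Once this is secured, the identity that the bracket of the horizontal lifts is the curvature follows at once from (\ref{eq:R_1}), and the remainder is formal.
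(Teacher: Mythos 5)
Your proposal is correct and takes essentially the same route as the paper's own proof: extend $X,Y$ to commuting vector fields, take the flows of their horizontal lifts (which are fibre-preserving on $\I M$ and realize parallel translation), restrict the group commutator of these flows to the fibre $\I_xM$ to get a smooth two-parameter family in $\mathsf{Hol}(x)$, and identify the mixed second derivative at the origin with the curvature vector field via Lemma \ref{bra} and (\ref{eq:R_1}). The only differences are cosmetic: you make the commuting extensions concrete (constant-coefficient fields in a chart) and you explicitly resolve the sign $-r_x(X,Y)=r_x(Y,X)$ by antisymmetry, a detail the paper leaves implicit.
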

\bp Indeed, let us consider the curvature vector field
$r_x(X,Y)\!\in\!  \mathfrak X(\I_xM)$, $X,Y\!\in\!T_xM$ and let $\hat
X, \hat Y \! \in\!  \mathfrak X (M)$ be commuting vector fields
i.e.~$[\hat X,\hat Y]\!=\!0$ such that $\hat X_x\!=\!X$, $\hat
Y_x\!=\!Y$. By the geometric construction, the flows $\{\phi_t\}$ and
$\{\psi_s\}$ of the horizontal lifts $l(\hat X)$ and $l(\hat Y)$ are
fiber preserving diffeomorphisms of the bundle $\I M$ for any
$t\in\R$, corresponding to parallel translations along integral curves
of $\hat X$ and $\hat Y$ respectively. Then the commutator
\begin{displaymath}
  \theta_{t,s}=[\phi_t, \psi_s] =\phi_t^{-1} \circ \psi_s^{-1} \circ
  \phi_t \circ \psi_s : \quad \I M\to\I M
\end{displaymath}
is also a fiber preserving diffeomorphism of the bundle $\I M$ for
any $t,s\in\R$. Therefore for any $x\in M$ the restriction
\begin{displaymath}
  \theta_{t,s}(x)=\theta_{t,s}\big|_{\I_xM}:\I_xM\to\I_xM
\end{displaymath}
to the fiber $\I_xM$ is a 2-parameter $C^\infty$-differentiable family
of diffeomorphisms contained in the holonomy group $\mathsf{Hol}(x)$
such that
\begin{displaymath}
  \theta_{0,s}(x) = \mathsf{Id}, \qquad \theta_{t,0}(x) = \mathsf{Id},
  \qquad \text{and} \qquad \frac{\partial^2}{\partial t\partial s}\Big
  |_{t=0,s=0}\theta_{t,s}(x)=r_x(X,Y),
\end{displaymath}
which proves that the curvature vector field $r_x(X,Y)$ is strongly
tangent to the holonomy group $\mathsf{Hol}(x)$ and hence we obtain
the assertion.  \ep

\begin{theorem}
  The curvature algebra ${\mathfrak R}_x$ of a Finsler manifold $(M,
  \F)$ is tangent to the holonomy group $\mathsf{Hol}(x)$ for any
  $x\in M$.
\end{theorem}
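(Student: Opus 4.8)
The plan is to obtain the theorem as an immediate consequence of the two principal results already established, Theorem \ref{liealg} and Proposition \ref{curvat}, by specialising the abstract framework of Section 3 to the indicatrix. I would take the differentiable manifold of Theorem \ref{liealg} to be the indicatrix $\I_xM$ and the subgroup $H\subset \mathsf{Diff}^{\infty}(\I_xM)$ to be the holonomy group $\mathsf{Hol}(x)$. The first point to record is that $\mathsf{Hol}(x)$ really does sit inside $\mathsf{Diff}^{\infty}(\I_xM)$: the induced maps $\tau^{\I}_c$ of (\ref{eq:tau_ind}) are smooth diffeomorphisms of the indicatrix, since the parallel translation $\tau_c$ is a smooth map between $\hat T_{c(0)}M$ and $\hat T_{c(1)}M$ preserving the value of $\mathcal F$.

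Next I would set $\mathcal V := \{\, r_x(X,Y) ;\ X, Y\in T_xM \,\}$, the set of all curvature vector fields at $x$. By Proposition \ref{curvat} every element of $\mathcal V$ is strongly tangent to $\mathsf{Hol}(x)$, so $\mathcal V$ is precisely a set of the type required in the hypothesis of Theorem \ref{liealg}. By the very definition of the curvature algebra, ${\mathfrak R}_x=\big\langle\, r_x(X,Y);\ X, Y\in T_xM \,\big\rangle$ is the Lie subalgebra of ${\mathfrak X}(\I_xM)$ generated by $\mathcal V$. Applying Theorem \ref{liealg} verbatim to this $\mathcal V$ and this $H$ then yields that ${\mathfrak R}_x$ is tangent to $\mathsf{Hol}(x)$, which is exactly the assertion.

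Since essentially all the work has been absorbed into the two cited results, there is no genuine difficulty left; the only point deserving care is the passage from the local Euclidean computations of Lemmas \ref{-1} and \ref{bra}, performed on neighbourhoods $U\subset{\mathbb R}^n$, to the manifold $\I_xM$. This is handled in the usual way by reading strong tangency and the bracket formula in local charts of the finite-dimensional smooth manifold $\I_xM$; both notions are local and chart-independent, so the transfer is routine and needs no hypothesis beyond the smoothness of the indicatrix. In particular the argument uses neither positive definiteness nor compactness, the indicatrix serving here only as the finite-dimensional manifold on which $\mathsf{Hol}(x)$ acts by smooth diffeomorphisms.
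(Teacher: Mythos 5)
Your proposal is correct and coincides with the paper's own proof: the authors likewise obtain the theorem immediately by combining Proposition \ref{curvat} (curvature vector fields are strongly tangent to $\mathsf{Hol}(x)$) with Theorem \ref{liealg} applied to $H=\mathsf{Hol}(x)$ acting on $\I_xM$. Your additional remarks on the inclusion $\mathsf{Hol}(x)\subset\mathsf{Diff}^{\infty}(\I_xM)$ and on reading the local lemmas in charts are sound but only make explicit what the paper leaves implicit.
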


\bp Since by Proposition \ref{curvat} the curvature vector fields are
strongly tangent to $\mathsf{Hol}(x)$ and the curvature algebra
${\mathfrak R}_x$ is algebraically generated by the curvature vector
fields, the assertion follows from Theorem \ref{liealg}.  \ep
\begin{proposition} 
  The curvature algebra ${\mathfrak R}_x$ of a Riemannian manifold
  $(M,g)$ at any point $x\in M$ is isomorphic to the linear Lie
  algebra over the vector space $T_xM$ generated by the curvature
  operators of $(M,g)$ at $x\in M$. 
\end{proposition}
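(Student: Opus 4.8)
The plan is to show that in the Riemannian case the curvature vector fields are precisely the linear vector fields on the sphere $\I_xM$ induced by the Riemannian curvature operators, and that passing from operators to these vector fields is a Lie algebra isomorphism onto ${\mathfrak R}_x$.

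First I would carry out the explicit reduction of the Finsler curvature to the Riemannian one. For a Riemannian metric the connection is linear, so $\Gamma^k_i(x,y)=\Gamma^k_{ij}(x)y^j$ and $\partial\Gamma^k_i/\partial y^m=\Gamma^k_{im}(x)$. Substituting into the local formula for $R_{(x,y)}$ collapses the $y$-dependence to a single linear factor,
\begin{displaymath}
  R_{(x,y)}=R^{k}{}_{lij}(x)\,y^l\,dx^i\otimes dx^j\otimes\frac{\partial}{\partial y^k},
\end{displaymath}
where $R^{k}{}_{lij}$ are the components of the usual Riemannian curvature tensor, so that
\begin{displaymath}
  r_x(X,Y)(y)=R_{(x,y)}(l_yX,l_yY)=R_x(X,Y)\,y,
\end{displaymath}
with $R_x(X,Y)\in\mathrm{End}(T_xM)$ the Riemannian curvature operator. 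Thus $r_x(X,Y)$ is the restriction to $\I_xM$ of the linear vector field $y\mapsto R_x(X,Y)y$. The relation $g_{(x,y)}(y,r_x(X,Y)(y))=0$ established before the proposition becomes $g_x(y,R_x(X,Y)y)=0$, i.e. $R_x(X,Y)\in\mathfrak{so}(T_xM,g_x)$, which guarantees tangency to the sphere $\I_xM$.

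Next I would set up the candidate isomorphism. Let $\mathfrak{g}\subset\mathfrak{gl}(T_xM)$ be the linear Lie algebra generated by the operators $R_x(X,Y)$, and define $\Phi\colon\mathfrak{g}\to{\mathfrak X}(\I_xM)$ by $\Phi(A)(y)=Ay|_{\I_xM}$. The key computation is the standard identity for linear vector fields: writing $V_A(y)=Ay$ one finds
\begin{displaymath}
  [V_A,V_B]=-V_{[A,B]},
\end{displaymath}
and this restricts to the sphere because $V_A,V_B$ are tangent to it. Hence $\Phi$ is a Lie algebra anti-homomorphism for the usual bracket; this is exactly compatible with the curvature algebra, since ${\mathfrak X}(\I_xM)$ carries the negative of the usual bracket as the Lie algebra of $\mathsf{Diff}(\I_xM)$, and in any event negating the bracket produces an isomorphic Lie algebra. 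Because $\Phi$ is a Lie algebra morphism sending each generator $R_x(X,Y)$ to the curvature vector field $r_x(X,Y)$, it maps the algebra $\mathfrak{g}$ generated by the curvature operators onto the algebra ${\mathfrak R}_x$ generated by the curvature vector fields, so $\Phi(\mathfrak{g})={\mathfrak R}_x$.

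Finally, injectivity is immediate: if $\Phi(A)=0$ then $Ay=0$ for every $y\in\I_xM$, and since the unit sphere spans $T_xM$ this forces $A=0$. Therefore $\Phi\colon\mathfrak{g}\to{\mathfrak R}_x$ is a Lie algebra isomorphism, as claimed. I expect the main obstacle to be the first step, namely checking cleanly that the Finsler curvature degenerates to the $y$-linear Riemannian curvature operator, together with keeping track of the sign and bracket convention on ${\mathfrak X}(\I_xM)$ in the second step; the injectivity is routine.
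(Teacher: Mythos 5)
Your proposal is correct and takes essentially the same route as the paper: both hinge on the observation that for a Riemannian metric the curvature is linear in $y$, so each curvature vector field is the restriction to the indicatrix of the linear vector field $y\mapsto R_x(X,Y)y$. The paper concludes tersely by noting that the flow of such a field is induced by the linear one-parameter group $\exp\bigl(tR_x(X,Y)\bigr)$ acting on $T_xM$, whereas you make the isomorphism explicit via the identity $[V_A,V_B]=-V_{[A,B]}$, the sign convention on ${\mathfrak X}(\I_xM)$, and the (routine) injectivity argument --- details the paper leaves implicit.
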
 
\bp The curvature tensor field of a Riemannian manifold given by the
equation (\ref{eq:R_1}) is linear with respect to $y\in T_xM$ and
hence
\begin{displaymath}
  R_{(x,y)}(\xi,\eta) = (R_x(\xi, \eta))^k_ly^l \frac{\partial}
  {\partial y^k} ,
\end{displaymath}
where $R_x(\xi, \eta))^k_l$ is the matrix of the curvature
operator $R_x(\xi, \eta)\colon T_xM\to T_xM$ with respect to the natural
basis
\begin{math}
  \big\{\frac{\partial}{\partial x^1}|_x, ... ,
  \frac{\partial}{\partial x^n}|_x\big\}.
\end{math}
Hence any curvature vector field
$r_x(\xi,\eta)(y)$ with $\xi,\eta\in T_xM$ has the shape
$r_x(\xi,\eta)(y) = (R_x(\xi,
\eta))^k_ly^l\frac{\partial}{\partial y^k}$.  It follows that the flow
of $r_x(\xi,\eta)(y)$ on the indicatrix ${\I}_xM$ generated by the
vector field $r_x(\xi,\eta)(y)$ is induced by the action of the linear
1-parameter group $\exp tR_x(\xi, \eta))$ on $T_xM$, which implies the
assertion.  \ep
\begin{remark} The curvature algebra of Finsler surfaces is one-dimensional.
\end{remark}
\bp For Finsler surfaces the curvature vector fields form a one-dimensional vector space and 
hence the generated Lie algebra is also one-dimensional. \ep

\section{Constant curvature}

Now, we consider a Finsler manifold $(M,\mathcal F)$ of non-zero constant  
curvature. In this case for any $x\in M$ the curvature 
vector field $r_x(X,Y)(y)$ has the shape (cf. (\ref{gorb}))
    \begin{displaymath}
      r(X,Y)(y) = c\left(\delta_j^ig_{km}(y)y^m - \delta_k^ig_{jm}(y)
        y^m \right) X^jY^k\frac{\partial}{\partial y^i}, \quad 0\neq
      c\in\R.
    \end{displaymath}
Putting $y_j = g_{jm}(y)y^m $ we can write $r(X,Y)(y) = c\left(\delta_j^i y_k - \delta_k^i y_j
       \right)X^jY^k\frac{\partial}{\partial y^i}.$ 
Any linear combination of curvature vector fields has the form 
\begin{displaymath}
  r(A)(y) = A^{jk}\left(\delta_j^i y_k - \delta_k^i y_j
  \right)\frac{\partial}{\partial y^i},
\end{displaymath}
where $A = A^{jk}\frac{\partial}{\partial
  x^j}\wedge\frac{\partial}{\partial x^k} \in T_xM\wedge T_xM$ is
arbitrary bivector at $x\in M$.  \bl Let $(M,\mathcal F)$ be a Finsler
manifold of non-zero constant curvature. The curvature algebra
${\mathfrak R}_x$ at any point $x\in M$ satisfies
\begin{equation} \dim {\mathfrak R}_x \ge\frac{n(n-1)}{2},\end{equation} 
where $n=\dim M$.  \el 
\bp Let us consider the curvature vector fields
$r_{jk}=r_x(\frac{\partial}{\partial y^j},\frac{\partial}{\partial y^k})(y)$ at a fixed point 
$x\in M$. If a linear combination 
\[A^{jk}r_{jk} = A^{jk}(\delta_j^i y_k - \delta_k^i y_j)\frac{\partial}{\partial y^i} = 
(A^{ik}y_k - A^{ji} y_j)\frac{\partial}{\partial y^i}= 2A^{ik}y_k\frac{\partial}{\partial y^i}\] 
of curvature vector fields $r_{jk}$ with constant coefficients 
$A^{jk} = -A^{kj}$ satisfies $A^{jk}r_{jk}=0$ for any $y\in T_xM$ 
then one has the linear equation $A^{ik}y_k=0$ for any fixed index $i$. Since the 
covector fields $y_1,\dots ,y_n$ are linearly independent we obtain $A^{jk} = 0$ 
for all $j,k\in \{1,\dots ,n\}$. It follows  
that the curvature vector fields $r_{jk}$ are linearly independent for any $j<k$ 
and hence $\dim {\mathfrak R}_x \geq \frac{n(n-1)}{2}$.
\ep
\begin{corollary}
  Let $(M,g)$ be a Riemannian manifold of non-zero constant curvature
  with $n=\dim M$. The curvature algebra ${\mathfrak R}_x$ at any
  point $x\in M$ is isomorphic to the orthogonal Lie algebra
  $\mathfrak o(n)$.
\end{corollary}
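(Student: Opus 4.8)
The plan is to sandwich $\mathfrak R_x$ between $\mathfrak o(n)$ from above and the dimension $\tfrac{n(n-1)}{2}$ from below, so that the two bounds force equality. First I would invoke the Proposition above asserting that for a Riemannian manifold the curvature algebra $\mathfrak R_x$ is isomorphic to the linear Lie subalgebra $\mathfrak g\subset\mathfrak{gl}(T_xM)$ generated by the curvature operators $R_x(X,Y)$, $X,Y\in T_xM$. This reduces the corollary to identifying $\mathfrak g$ inside $\mathfrak{gl}(T_xM)$.

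Next I would prove the inclusion $\mathfrak g\subseteq\mathfrak o(n)$, where $\mathfrak o(n)=\mathfrak o(T_xM,g_x)$ is the orthogonal Lie algebra of the Euclidean space $(T_xM,g_x)$. For constant curvature $c\neq0$ the defining formula (\ref{gorb}), read through the Proposition, gives $R_x(X,Y)Z=c\big(g(Y,Z)X-g(X,Z)Y\big)$, and hence $g\big(R_x(X,Y)Z,W\big)=c\big(g(Y,Z)g(X,W)-g(X,Z)g(Y,W)\big)$, which is antisymmetric under $Z\leftrightarrow W$. Thus every generator $R_x(X,Y)$ is skew-symmetric with respect to $g_x$, i.e.\ lies in $\mathfrak o(n)$. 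Since $\mathfrak o(n)$ is a Lie subalgebra of $\mathfrak{gl}(T_xM)$ containing all the generators, it contains the algebra they generate, so $\mathfrak g\subseteq\mathfrak o(n)$ and therefore $\dim\mathfrak R_x\le\tfrac{n(n-1)}{2}$.

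Then I would apply the preceding Lemma, which gives $\dim\mathfrak R_x\ge\tfrac{n(n-1)}{2}$; this applies because a Riemannian metric of constant curvature is in particular a Finsler metric of constant curvature. Combining the two inequalities with $\dim\mathfrak o(n)=\tfrac{n(n-1)}{2}$ yields $\dim\mathfrak R_x=\dim\mathfrak o(n)$, and an inclusion of finite-dimensional Lie algebras of equal dimension is an equality, so $\mathfrak g=\mathfrak o(n)$ and $\mathfrak R_x\cong\mathfrak o(n)$.

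The only genuinely non-formal step is the skew-symmetry computation of the second paragraph; the rest is the dimension count already prepared by the Lemma. I expect the subtlety to be purely bookkeeping: one could instead observe that $X\wedge Y\mapsto R_x(X,Y)$ realizes the classical isomorphism $\Lambda^2T_xM\cong\mathfrak o(n)$, so the generators already span $\mathfrak o(n)$ linearly and no bracketing is needed — but routing the argument through the dimension bound of the Lemma avoids having to check surjectivity of this map directly, which is why I would prefer it.
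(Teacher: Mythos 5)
Your proof is correct, and it reaches the conclusion by the same sandwich structure as the paper --- an upper bound $\mathfrak R_x \subseteq \mathfrak o(n)$ played against the Lemma's lower bound $\dim \mathfrak R_x \ge \tfrac{n(n-1)}{2}$ --- but you establish the upper bound by a genuinely different route. The paper gets the inclusion from holonomy theory: parallel translation on a Riemannian manifold is linear and metric-preserving, so $\mathsf{Hol}(x)$ is a subgroup of $O(n)$, and since the curvature algebra is tangent to $\mathsf{Hol}(x)$ it must be a subalgebra of $\mathfrak o(n)$; the Lemma then forces equality. You instead stay entirely algebraic: reading the constant-curvature formula (\ref{gorb}) through the Proposition gives $R_x(X,Y)Z = c\bigl(g(Y,Z)X - g(X,Z)Y\bigr)$, whose $g$-skew-symmetry you verify directly, so the operator algebra generated lies in $\mathfrak o(n)$ with no appeal to holonomy at all. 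Your route is more self-contained: it avoids both the classical fact about Riemannian holonomy and the tangency theorem (whose use here implicitly requires knowing that vector fields on the indicatrix tangent to a subgroup of $O(n)$ come from elements of $\mathfrak o(n)$), and your skew-symmetry computation in fact shows $\mathfrak R_x \subseteq \mathfrak o(n)$ for an arbitrary Riemannian metric, not just one of constant curvature. The paper's route is shorter and makes the conceptual point that curvature algebras are always constrained by the holonomy group. Your closing observation --- that $X\wedge Y \mapsto R_x(X,Y)$ is, up to the nonzero factor $c$, the classical isomorphism $\Lambda^2 T_xM \cong \mathfrak o(n)$, so the generators already span $\mathfrak o(n)$ linearly --- would even let you bypass the Lemma entirely; either way the argument is sound.
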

\begin{proof}
  The holonomy group of a Riemannian manifold is a subgroup of the
  orthogonal group $O(n)$ of the tangent space $T_xM$ and hence the
  curvature algebra ${\mathfrak R}_x$ is a sub\-algebra of the
  orthogonal Lie algebra ${\mathfrak o}(n)$.  Hence the previous
  assertion implies the corollary.
\end{proof}
\begin{theorem}
  Let $(M,\mathcal F)$ be a Finsler manifold of non-zero constant
  curvature with $n\!=\!\dim M > 2$. If the point $x\in M$ is not
  (semi-)Riemannian then the curvature algebra ${\mathfrak R}_x$ at
  $x\in M$ satisfies
  \begin{equation}
    \label{ineq} 
    \dim {\mathfrak R}_x > \frac{n(n-1)}{2}.
  \end{equation}
\end{theorem}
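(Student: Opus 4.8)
The plan is to upgrade the previous lemma. That lemma exhibits a $\frac{n(n-1)}{2}$-dimensional space $W=\{r(A):A\in T_xM\wedge T_xM\}$ of curvature vector fields sitting inside $\mathfrak{R}_x$ and realizing the lower bound. Since $\mathfrak{R}_x$ is closed under the Lie bracket and contains $W$, it suffices to produce two curvature vector fields whose bracket does \emph{not} lie in $W$: this yields at least one extra dimension and hence the strict inequality (\ref{ineq}).

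The first step is to compute the bracket of two curvature vector fields. Writing $r(A)(y)=2A^{ik}y_k\frac{\partial}{\partial y^i}$ (suppressing the nonzero factor $c$), with $y_k=g_{km}(y)y^m$, the only ingredient needed is the identity $\frac{\partial y_l}{\partial y^i}=g_{li}(y)$, which follows from $\frac{\partial g_{lm}}{\partial y^i}y^m=0$; indeed the Cartan tensor $C_{lmi}=\frac12\frac{\partial g_{lm}}{\partial y^i}$ annihilates $y$ because $g_{ij}$ is homogeneous of degree $0$. A direct computation then gives
\[
[r(A),r(B)](y)=4\big(B^{jl}g_{li}(y)A^{ik}-A^{jl}g_{li}(y)B^{ik}\big)y_k\frac{\partial}{\partial y^j}.
\]
If $g_{ij}$ were constant, i.e.\ at a (semi-)Riemannian point, the antisymmetry of $A,B$ would make the right-hand side again a curvature vector field (this is the closure of $\mathfrak{o}(g)$ under commutator, reproducing $\mathfrak{R}_x\cong\mathfrak{o}(n)$ of the Corollary). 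Hence the entire obstruction to the bracket being new is concentrated in the $y$-dependence of $g_{ij}$, that is, in the Cartan tensor.

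Since $x$ is not (semi-)Riemannian, there is a direction $y_0$ with $C_{ijk}(y_0)\neq0$; after a linear change of coordinates I may assume $g_{ij}(y_0)=\delta_{ij}$, so that $\frac{\partial g_{ij}}{\partial y^m}(y_0)=2C_{ijm}(y_0)$ and $\frac{\partial g_{ij}}{\partial y^m}(y_0)y_0^m=0$. I would then compare $1$-jets at $y_0$. At $y_0$ the bracket equals a multiple of $[B,A]y_0$ and agrees to zeroth order with a suitable curvature vector field, so the discrepancy is of first order and is governed by the values $C_{ijk}(y_0)$. Assuming for contradiction that $[r(A),r(B)]=r(D)$ for some bivector $D$, I differentiate this identity once in $y$ and evaluate at $y_0$; the zeroth- and first-order conditions together determine $D$ uniquely and force the matrix $E=E(A,B)$ whose $m$-th column is $B\gamma_m(Ay_0)-A\gamma_m(By_0)$, with $\gamma_m:=\big(C_{ijm}(y_0)\big)_{ij}$, to be antisymmetric (the antisymmetric summand $[B,A]$ coming for free, and the consistency $Ey_0=0$ holding automatically by $C_{ijm}(y_0)y_0^m=0$).

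The crux, and the step I expect to be the main obstacle, is to show that for $n>2$ and a nonzero totally symmetric $C(y_0)$ one can choose antisymmetric $A,B$ making $E$ \emph{not} antisymmetric; then no bivector $D$ works and $[r(A),r(B)]\notin W$. This is a finite-dimensional linear-algebra statement: the symmetric part of $E$ is a bilinear pairing of $A,B$ against the Cartan tensor, and both hypotheses enter to make it nonvanishing — the condition $C(y_0)\neq0$ supplies a nonzero contraction, while $n>2$ provides coordinate directions distinct from $y_0$ on which to place $A$ and $B$ (for instance, appropriate coordinate bivectors, one of which contains the line $\R y_0$ and one of which does not). For $n=2$ the construction necessarily collapses, in agreement with the one-dimensionality of $\mathfrak{R}_x$ recorded in the Remark.
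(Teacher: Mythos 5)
Your proposal is correct and matches the paper up to the bracket computation: the reduction (strict inequality follows once a single bracket of curvature vector fields escapes the span $W$ of curvature vector fields), the identity $\frac{\partial y_l}{\partial y^i}=g_{li}$, the formula $[r(A),r(B)]=4\bigl(B^{jl}g_{li}A^{ik}-A^{jl}g_{li}B^{ik}\bigr)y_k\frac{\partial}{\partial y^j}$, and the jet analysis at $y_0$ (the first-order condition forces $D=[B,A]+2E$, hence $E$ antisymmetric, with $Ey_0=0$ automatic) are all sound. But the proof stops exactly where it must deliver: you never prove that for $n>2$ and a nonzero totally symmetric $C(y_0)$ with $C_{ijm}(y_0)y_0^m=0$ one can choose antisymmetric $A,B$ making $E$ non-antisymmetric; you explicitly defer this as ``the main obstacle'' and offer only a heuristic about coordinate bivectors. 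Since the entire theorem is equivalent to precisely this finite-dimensional statement, what you have submitted is a reformulation, not a proof — that is the genuine gap.

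The gap is fillable along the lines you hint at, so the approach itself does not fail. Normalize $g(y_0)=\delta$, $y_0=e_n$; take $B$ antisymmetric with $By_0=0$ and $A$ with $Ay_0=e_a$, $a<n$. Then $E=BP^{(a)}$ where $P^{(a)}_{lm}:=C_{lam}(y_0)$ is symmetric with vanishing $n$-th row and column, so $E+E^{T}=[B,P^{(a)}]$. If this vanished for every such $B$ and every $a<n$, then each $P^{(a)}$ restricted to $y_0^{\perp}$ would commute with all of $\mathfrak{o}(n-1)$, hence equal $\lambda_a I_{n-1}$ (here $n-1\geq 2$ is used); total symmetry gives $\lambda_a\delta_{lm}=\lambda_l\delta_{am}$ for $a,l,m<n$, and choosing $m=l\neq a$ forces all $\lambda_a=0$, i.e.\ $C(y_0)=0$, a contradiction. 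Two further loose ends you should address: the theorem does not assume positive definiteness, so in general $g(y_0)$ can only be normalized to a signature matrix $\eta$ (the condition becomes ``$E\eta$ antisymmetric'' and the commutant argument must be run for $\mathfrak{o}(\eta)$), and $y_0$ must be chosen with $\mathcal F(y_0)\neq 0$ (possible by density) so that the identity, given on the indicatrix, localizes at $y_0$ by homogeneity. For comparison, the paper never localizes: it exploits the identity for all $y$, specializes to the basis bivectors $E_{ab}$, takes a trace to get $(n-2)(g_{bd}y_c-g_{bc}y_d)=\Lambda^{l}_{b,cd}y_l$, and massages this into $g_{bd}(y)=\lambda_{bd}$ constant, contradicting non-(semi-)Riemannianness directly. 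Your route, once completed, is arguably more structural — it isolates the Cartan tensor at a single point as the sole obstruction — but as it stands the decisive step is missing.
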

\begin{proof} 
  We assume $\dim {\mathfrak R}_x = \frac{n(n-1)}{2}$. For any
  constant skew-symmetric matrices \;$\{A^{jk}\}$\; and
  \;$\{B^{jk}\}$\; the Lie bracket of vector fields
  \;$A^{ik}y_k\frac{\partial}{\partial y^i}$\; and
  \;$B^{ik}y_k\frac{\partial}{\partial y^i}$\; has the shape
  \;$C^{ik}y_k\frac{\partial}{\partial y^i}$,\; where \;$\{C^{ik}\}$\;
  is a constant skew-symmetric matrix, too.  Using the homogeneity of
  $g_{hl}$ we obtain
\begin{equation}
  \label{der}
  \frac{\partial y_h}{\partial y^m} = \frac{\partial g_{hl}}{\partial
    y^m}\,y^l + g_{hm} = g_{hm}
\end{equation}
and hence  
\begin{alignat*}{1}
  & \left[A^{mk}\,y_k\,\frac{\partial}{\partial
      y^m},B^{ih}\,y_h\,\frac{\partial}{\partial y^i}\right] = \left
    (A^{mk}\,B^{ih}\,\frac{\partial y_h}{\partial y^m} -
    B^{mk}\,A^{ih}\,\frac{\partial y_h}{\partial y^m}\right
  )y_k\,\frac{\partial}{\partial y^i}
  \\
  & \qquad = \left (B^{ih}\,g_{hm}\,A^{mk} -
    A^{ih}\,g_{hm}\,B^{mk}\right )y_k\,\frac{\partial}{\partial y^i} =
  C^{ik}\,y_k\,\frac{\partial}{\partial y^i}.
\end{alignat*}
Particularly, for the skew-symmetric matrices $E^{ij}_{ab}=\delta
  ^i_a \delta ^j_b \! -\! \delta ^i_b \delta ^j_a$, $a,b\in
\{1,\dots ,n\}$, we have
\begin{displaymath}
\left[E^{ij}_{ab}\,y_j\,\frac{\partial}{\partial y^i},
  \,E^{kl}_{cd}\, y_l\,\frac{\partial}{\partial y^k}\right] = 
\left (E^{ih}_{cd}\,g_{hm}\,E^{mk}_{ab} -
  E^{ih}_{ab}\,g_{hm}\,E^{mk}_{cd}\right
)y_k\,\frac{\partial}{\partial y^i} = \Lambda
^{im}_{ab,cd}\,y_m\,\frac{\partial}{\partial y^i},
\end{displaymath}
where the constants $\Lambda ^{ij}_{ab,cd}$ satisfy 
\begin{math}
  \Lambda ^{ij}_{ab,cd}\!=\!-\Lambda ^{ji}_{ab,cd}\!=\!-\Lambda
  ^{ij}_{ba,cd} \!=\!-\Lambda ^{ij}_{ab,dc}\!=\!-\Lambda
  ^{ij}_{cd,ab}.
\end{math}
Putting $i=a$ and computing the trace for these indices we obtain
\begin{equation}\label{lambda}
  (n-2)(g_{bd}\,y_c - g_{bc}\,y_d) = \Lambda ^{l}_{b,cd}\,y_l,
\end{equation}
where $\Lambda ^{l}_{b,cd} := \Lambda ^{il}_{ib,cd}$. The right hand
side is a linear form in variables $y_1,\dots,y_n$. According to the
identity (\ref{lambda}) this linear form vanishes for $y_c=y_d =0$,
hence $\Lambda ^{l}_{b,cd} = 0$ for $l\neq c,d$. Denoting
$\lambda_{bd}^{(c)}:= \frac{1}{n-2}\Lambda ^{c}_{b,cd}$ (no summation
for the index $c$) we get the identities
\[g_{bd}\,y_{c} - g_{bc}\,y_d = \lambda ^{(c)}_{bd}\,y_{c} - \lambda
^{(d)}_{bc}\,y_{d}\quad \text{(no summation for $c$ and $d$)}.\]
Putting $y_d = 0$ we obtain $g_{bd}\big|_{y_d=0} = \lambda
^{(c)}_{bd}$ for any $c\neq d$. It follows $\lambda ^{(c)}_{bd}$ is
independent of the index $c\;(\neq d)$.  Defining $\lambda _{bd}
:=\lambda ^{(c)}_{bd}$ with some $c\;(\neq d)$ we obtain from
(\ref{lambda}) the identity
\begin{equation}
  \label{vegso} 
  g_{bd}\,y_{c} - g_{bc}\,y_d = \lambda _{bd}\,y_{c} - \lambda
  _{bc}\,y_{d}
\end{equation} 
for any $b,c,d\in \{1,\dots,n\}$.  We have
\begin{displaymath}
  \lambda _{cd}\,y_{b} - \lambda _{cb}\,y_{d} = (g_{bd}\,y_{c} -
  g_{bc}\,y_d) - (g_{db}\,y_{c} - g_{dc}\,y_b) = (\lambda _{bd}\,y_{c}
  - \lambda _{bc}\,y_d) - (\lambda _{db}\,y_{c} - \lambda _{dc}\,y_b).
\end{displaymath}
which implies the identity
\begin{alignat}{1}
  \notag & (\lambda _{cd}\,y_{b} - \lambda _{cb}\,y_{d}) + (\lambda
  _{db}\,y_{c} - \lambda _{dc}\,y_b) + (\lambda _{bc}\,y_d - \lambda
  _{bd}\,y_{c}) =
  \\
  \label{szimm}
  & \qquad = (\lambda _{cd} - \lambda _{dc})\,y_b + (\lambda _{db} -
  \lambda _{bd})\,y_{c} + (\lambda _{bc} - \lambda _{cb})\,y_{d} = 0.
\end{alignat}
Since $\dim M > 2$, we can consider $3$ different indices $b, c, d$
and we obtain from the identity (\ref{szimm}) that $\lambda _{bc} =
\lambda _{cb}$ for any $b, c\in \{1,\dots ,n\}$.  \\[1ex]\indent By
derivation the identity (\ref{vegso}) we get
\begin{displaymath}
  \frac{\partial g_{bd}}{\partial y_a}\,y_{c} - \frac{\partial
    g_{bc}}{\partial y_a}\,y_d + g_{bd}\,\delta ^{a}_{c} -
  g_{bc}\,\delta ^{a}_d = \lambda _{bd}\,\delta ^{a}_{c} - \lambda
  _{bc}\,\delta ^{a}_{d}.
\end{displaymath}
Using (\ref{der}) we obtain
\begin{alignat*}{1}
  \frac{\partial y_a}{\partial y^q}\left(\frac{\partial
      g_{bd}}{\partial y_a}\,y_{c} - \frac{\partial g_{bc}}{\partial
      y_a}\,y_d\right) + g_{bd}\,g_{cq} - g_{bc}\,g_{dq} & =
  \\
  = \frac{\partial g_{bd}}{\partial y^q}\,y_{c} - \frac{\partial
    g_{bc}}{\partial y^q}\,y_d + g_{bd}\,g_{cq} - g_{bc}\,g_{dq} & =
  \lambda _{bd}\,g_{cq} - \lambda _{bc}\,g_{dq}.
\end{alignat*}
Since
\begin{displaymath}
  \left(\frac{\partial g_{bd}}{\partial y^q}\,y_{c} - \frac{\partial
      g_{bc}}{\partial y^q}\,y_d\right)y^b = 0\] we get the identity
  \[y_{d}\,g_{cq} - y_{c}\,g_{dq} = \lambda _{bd}\,y^b\,g_{cq} -
  \lambda _{bc}\,y^b\,g_{dq}.
\end{displaymath}
Multiplying both sides of this identity by the inverse $\{g^{qr}\}$ of
the matrix $\{g_{cq}\}$ and taking the trace with respect to the
indices $c,r$ we obtain the identity
\[(n-1)\,y_{d} = (n-1)\lambda _{bd}\,y^b.\] Hence we obtain that
$g_{bd}\,y^b = \lambda _{bd}\,y^b$ and hence $g_{bd} = \lambda _{bd},$
which means that the point $x\in M$ is (semi-)Riemannian. From this
contradiction follows the assertion.
\end{proof}
\bt Let $(M,\mathcal F)$ be a positive definite Finsler manifold of
non-zero constant curvature with $n=\dim M >2$. The holonomy group of
$(M,\mathcal F)$ is a compact Lie group if and only if $(M,\mathcal
F)$ is a Riemannian manifold.  \et \bp We assume that the holonomy
group of a Finsler manifold $(M,\mathcal F)$ of non-zero constant
curvature with $\dim M \geq 3$ is a compact Lie transformation group
on the indicatrix ${\I}_xM$.  The curvature algebra ${\mathfrak R}_x$
at a point $x\in M$ is tangent to the holonomy group $\mathsf{Hol}(x)$
and hence $\dim \mathsf{Hol}(x) \ge \dim{\mathfrak R}_x $.  If there
exists a not (semi-)Riemannian point $x\in M$ then $\dim {\mathfrak
  R}_x > \frac{n(n-1)}{2}$.  The $(n-1)$-dimensional indicatrix
${\I}_xM$ at $x$ can be equipped with a Riemannian metric which is
invariant with respect to the compact Lie transformation group
$\mathsf{Hol}(x)$. Since the group of isometries of an
$n-1$-dimensional Riemannian manifold is of dimension at most
$\frac{n(n-1)}{2}$ (cf. Kobayashi \cite{Kob}, p.~46,) we obtain a
contradiction, which proves the assertion.  \ep Since the holonomy
group of a Landsberg manifold is a subgroup of the isometry group of
the indicatrix, we obtain that any Landsberg manifold of non-zero
constant curvature with dimension $> 2$ is Riemannian (c.f. Numata
\cite{Num}).  \\[1ex]\indent We can summarize our results as follows:
\bt The holonomy group of any non-Riemannian positive definite Finsler
manifold of non-zero constant curvature with dimension $> 2$ does not
occur as the holonomy group of any Riemannian manifold.  \et

\section{Appendix: Finsler metric with infinite dimensional curvature
  algebra}

Let us consider the singular (non $y$-global) Finsler manifold $(H_3, \mathcal F)$,
where $H_3$ is the 3-dimensional Heisenberg group and $\mathcal F$ is
a left-invariant Berwald-Mo\'or metric (c.f. \cite{Shen1}, Example
1.1.5, p.~8).
\\
The group $H_3$ can be realized as the Lie group of matrices of the
form
\begin{math}\left[
    \begin{smallmatrix} 
      1 & x^1 & x^2
      \\
      0 & 1 & x^3
      \\
      0 & 0 & 1
    \end{smallmatrix} \right],\end{math} where $x=(x^1,x^2,x^3)\in
\mathbb{R}^3$ and hence the multiplication can be written as
\begin{displaymath}
  (x^1,x^2,x^3)\cdot(y^1,y^2,y^3)=(x^1+y^1,x^2+y^2+x^1y^3,x^3+y^3).
\end{displaymath}
The vector $0\!=\!(0,0,0)\in \mathbb{R}^3$ gives the unit element of
$H_3$. The Lie algebra ${\mathfrak h}_3=T_0H_3$ consists of matrices
of the form
\begin{math}
  \left[
    \begin{smallmatrix}
      0 & a^1 & a^2
      \\
      0 & 0 & a^3
      \\
      0 & 0 & 0
    \end{smallmatrix}
  \right],
\end{math}
corresponding to the tangent vector
\begin{math}
  a\! = \! a^1\frac{\partial}{\partial x^1} +
  a^2\frac{\partial}{\partial x^2} + a^3\frac{\partial}{\partial x^3}
\end{math}
at the unit element $0\in H_3$.  A left-invariant Berwald-Mo\'or
Finsler metric $\mathcal F$ is induced by the (singular) Minkowski
functional $\mathcal F_{_0}\colon {\mathfrak h}_3 \to \mathbb R$:
\begin{displaymath}
  \mathcal F_{_0}(a):= \left( a^1a^2a^3 \right)^{\frac{2}{3}}
\end{displaymath}
of the Lie algebra in the following way: if $y=(y^1,y^2,y^3)$ is a
tangent vector at $x \in H_3$, then
\[ \mathcal F(x,y) := \mathcal F_{_0}(x^{-1}y).\] The coordinate
expression of the singular (non $y$-global) Finsler metric $\mathcal F$ is
\begin{displaymath}
  \mathcal F(x,y) = \left(y^1 \left(y^2\!-\!x^1y^3 \right) 
    y^3\right)^{\frac{2}{3}}.
\end{displaymath}
Since $\mathcal F$ is left-invariant, the associated geometric
structures (connection, geodesics, curvature) are also left-invariant
and the curvature algebras at different points are isomorphic.  Using
the notation
\[r_x(i,j)=r_x\Bigl(\frac{\partial}{\partial x^i},\frac{\partial}{\partial
  x^j}\Bigl), \quad i,j=1,2,3,\] 
for curvature vector fields, a direct computation yields  
\vspace{-22pt}
\begin{center}
 \small
   \begin{alignat*}{1}
     r_x(1,2)& = \frac{1}{4}\left( {\frac
         {5{y^1}^{2}{y^3}^{2}}{\left(x^1y^3 \! - \!  y^2 \right)
           ^{3}}}\; \frac{\partial}{\partial y^1} \,+ \, {\frac
         {y^1{y^3}^{2} \left( 3\,x^1y^3+y^2 \right) }{ \left( y^2 \!
             - \! x^1y^3 \right) ^{3}}}\; \frac{\partial}{\partial
         y^2} \,+ \, {\frac {4y^1{y^3}^{3}}{ \left(y^2 \! - \! x^1y^3
           \right) ^{3}}}\; \frac{\partial}{\partial y^3} \right),
     \\
     r_x(1,3)& = \frac{1}{4} \Bigg(\frac {{y^1}^{2}y^3 \left( 6\,
         x^1y^3 \! - \! 11\,y^2 \right)} {\left( x^1y^3 \! - \! y^2
       \right) ^{3}}\; \frac{\partial}{\partial y^1} \,+ \, {\frac {4
         y^1{y^3}^{2}x^1 \left( 2\,x^1y^3 \! - \! 3\,y^2 \right) }{
         \left(
           y^2 \! - \! x^1y^3 \right) ^{3}}}\; \frac{\partial}{\partial y^2} \,+ \\
     & \quad \quad \quad + {\frac {y^1{y^3}^{2} \left( 7\,x^1y^3 \! -
           \!  11\,y^2 \right) }{ \left( y^2 \! - \!  x^1y^3 \right)
         ^{3}}}\; \frac{\partial}{\partial y^3} \Bigg),
     \\
     r_x(2,3) &= \frac{1}{4} \left( {\frac{4 {y^1}^{3}y_{{3}}} {
           \left( x^1y^3 \! - \!  y^2 \right)^{3}}}\;
       \frac{\partial}{\partial y^1} \,+ \, {\frac {{y^1}^{2}y^3
           \left( 6\,x^1y^3 \! - \!  y^2 \right) }{ \left( y^2 \! - \!
             x^1y^3 \right) ^{3}}}\; \frac{\partial}{\partial y^2} \,+
       \, {\frac {5 {y^1}^{2}{y^3}^{2}}{ \left( y^2 \! - \!  x^1y^3
           \right) ^{ 3}}}\; \frac{\partial}{\partial y^3}\right).
    \end{alignat*}
  \end{center}
  The curvature vector fields $r_0(i,j), \; i,j=1,2,3,$ at the unit
  element $0\in H_3$ generate the curvature algebra ${\mathfrak
    r}_0$. Let us denote \( Y^{k,m}:=
  \frac{{y^1}^{k}{y^3}^{m}}{{y^2}^{k+m-1}}, \;k,m \in \mathbb N\), and
  consider the vector fields
\begin{equation}
  \label{eq:A}
  A^{k,m}(a^1,a^2,a^3)= a^1 Y^{k+1,m}\frac{\partial}{\partial
    y^1}\Big|_0 \,+ \,a^2 Y^{k,m}\frac{\partial}{\partial y^2}\Big|_0
  \,+ \,a^3 Y^{k,m+1}\frac{\partial}{\partial y^3}\Big|_0,
\end{equation}
with $(a^1,a^2,a^3) \in \mathbb R^3$ and $k,m \in \mathbb N$. Then the
curvature vector fields $r_0(i,j)$ at $0\in H_3$ can be written in the
form
{\small
  \begin{displaymath}
    r_0(1,2)=\frac 14 A^{1,2}(-5,1,4), \; \;r_0(1,3)=\frac 14
    A^{1,1}(11,0,-11),\; \; r_0(2,3)=\frac 14 A^{2,1}(-4,-1,5).
  \end{displaymath}}
\begin{proposition}
  The curvature algebra ${\mathfrak r}_x$ at any point $x\in M$ is a
  Lie algebra of infinite dimension.
\end{proposition}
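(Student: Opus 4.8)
The claim is that the curvature algebra $\mathfrak r_0$ at the identity of the Heisenberg group, generated by the three curvature vector fields $r_0(1,2)$, $r_0(1,3)$, $r_0(2,3)$, is infinite-dimensional. We already have these expressed via the vector fields $A^{k,m}(a^1,a^2,a^3)$, so the strategy should be to understand how the Lie bracket acts on the family $\{A^{k,m}\}$ and to show that iterated brackets produce $A^{k,m}$ for arbitrarily large $k+m$.

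Let me think about how to structure this.

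The natural plan: compute the bracket of two generic fields $A^{k,m}(a)$ and $A^{p,q}(b)$ and show it is again a combination of $A$-type fields, but with the indices added (shifted), so brackets increase $k+m$. Then show that the coefficient vectors produced are nonzero, so that at each "level" $k+m=N$ we keep getting new linearly independent fields.

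Let me reason more carefully about what I'd actually do. The fields $A^{k,m}$ have a specific structure. Let me denote $Y^{k,m} = (y^1)^k (y^3)^m / (y^2)^{k+m-1}$. The field $A^{k,m}(a) = a^1 Y^{k+1,m}\partial_1 + a^2 Y^{k,m}\partial_2 + a^3 Y^{k,m+1}\partial_3$ (restricted to the fiber, with $\partial_i = \partial/\partial y^i$).
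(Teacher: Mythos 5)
Your proposal is a strategy outline, not a proof: it stops exactly where the mathematical work begins. You correctly identify that the bracket of two fields of type $A^{k,m}$ is again of type $A^{k+p,m+q}$ (this is also the first step of the paper's argument), but the statement you defer --- ``show that the coefficient vectors produced are nonzero, so that at each level we keep getting new linearly independent fields'' --- is precisely the crux, and it is not automatic. Brackets of $A$-type fields can and do vanish for nontrivial coefficient vectors: for instance, with the bracket structure of this example one has $\left[A^{1,1}(1,0,-1),\,A^{1,1}(2,1,0)\right]=0$. So the observation that bracketing shifts $(k,m)$ upward does not by itself produce infinitely many nonzero elements; without an argument guaranteeing nonvanishing of an infinite family of iterated brackets, the proof does not exist yet.

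For comparison, the paper closes this gap by contradiction: assuming $\dim\mathfrak r_0=N<\infty$, it chooses a basis $\{A^{k_i,m_i}(a_i)\}_{i=1}^N$ with $(k_i,m_i)$ lexicographically increasing, brackets the top element $A^{k_N,m_N}(a_N)$ first with $\tfrac{4}{11}r_0(1,3)=A^{1,1}(1,0,-1)$ and then with $4r_0(1,2)=A^{1,2}(-5,1,4)$; since the resulting indices exceed $(k_N,m_N)$, the resulting coefficient vectors must vanish, and the two homogeneous linear systems so obtained force first $k_N=m_N$ and then $k_N=0$, a contradiction. A direct (non-contradiction) version of your plan is salvageable, but you would have to exhibit an explicit nonvanishing sequence: e.g., compute that $\left[A^{1,1}(1,0,-1),A^{k,m}(a^1,a^2,a^3)\right]=A^{k+1,m+1}(c^1,c^2,c^3)$ with $c^2=(k-m)a^2$, and then check that the iterated brackets $\bigl(\mathrm{ad}\,A^{1,1}(1,0,-1)\bigr)^j A^{1,2}(-5,1,4)=A^{1+j,2+j}(\ast,\pm 1,\ast)$ all have second coefficient $\pm 1\ne 0$, hence are nonzero and pairwise linearly independent because the monomials $Y^{k,m}$ with distinct $(k,m)$ are linearly independent. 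Either route requires the explicit bracket computation that your proposal never carries out.
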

\bp Since the Finsler metric is left-invariant, the curvature algebras
at different points are isomorphic.  Therefore it is enough to prove
that the curvature algebra ${\mathfrak r}_0$ at $0\! \in \!H_3$ has
infinite dimension.  We prove the statement by contradiction: let us
suppose that ${\mathfrak r}_0$ is finite dimensional.
\\
A direct computation shows that for any $(a^1,a^2,a^3),
(b^1,b^2,b^3)\in \mathbb R^3$ one has
\[\left[A^{k,m}(a^1,a^2,a^3),A^{p,q}(b^1,b^2,b^3)\right ]=A^{k+p,m+q}(c^1,c^2,c^3)\] 
with some $(c^1,c^2,c^3)\in \mathbb R^3$.  It follows that any
iterated Lie bracket of curvature vector fields $r_0(i,j), \;
i,j=1,2,3,$ has the shape (\ref{eq:A}) and hence there exists a basis
of the curvature algebra ${\mathfrak r}_0$ of the form
$\{A^{k_i,m_i}(a_i^1,a_i^2,a_i^3)\}_{i=1}^{N}$, where $N\in \mathbb N$
is the dimension of ${\mathfrak r}_0$.  We can assume that
$\{(k_i,m_i)\}_{i=1}^N$ forms an increasing sequence,
i.e. $(k_1,m_1)\le(k_2,m_2)\le\dots\le(k_N,m_N)$ holds with respect to
the lexicographical ordering of $\mathbb N\times \mathbb N$. We can
consider the vector fields
\begin{displaymath}
  \frac{4}{11} r_0(1,3)=A^{1,1}(1,0,-1),\; 4 r_0(1,2)=A^{1,2}(-5,1,4),
  \; 4 r_0(2,3)=A^{2,1}(-4,-1,5)
\end{displaymath}
as the first three members of this sequence.  Hence $1\leq k_N,m_N$
and
\begin{displaymath}
  \left[A^{1,1}(1,0,-1),\ A^{k_N,m_N}(a_N^1,a_N^2,a_N^3)\right ] =
  A^{1+k_N,1+m_N}(c^1,c^2,c^3)
\end{displaymath}
belongs to ${\mathfrak r}_0$, too, where $c^1=(k_N-m_N-1)a_N^1 +
2a_N^2 - a_N^3$, $c^2=(k_N-m_N)a_N^2$ and $c^3=a_N^1 - 2a_N^2 +
(k_N-m_N+1)a_N^3$. Since $k_N< 1+k_N$, $m_N< 1+m_N$ we have
$c^1=c^2=c^3=0$ and hence the homogeneous linear system 
\begin{alignat*}{1}
  0  & = (k_N-m_N-1)a_N^1 + 2a_N^2 - a_N^3,
  \\
  0 & = (k_N-m_N)a_N^2,
  \\
  0 &= a_N^1 - 2a_N^2 + (k_N-m_N+1)a_N^3 
\end{alignat*}
has a solution $(a_N^1,a_N^2,a_N^3)\ne(0,0,0)$. It follows that $k_N=m_N$. \\
Similarly, computing the Lie bracket
\[0=\left[A^{1,2}(-5,1,4),A^{k_N,k_N}(a_N^1,a_N^2,a_N^3)\right ] =
A^{1+k_N,2+k_N}(d^1,d^2,d^3)\] Since $k_N< 1+k_N<2+k_N$ we have
$d^1=d^2=d^3=0$ giving the homogeneous linear system 
\begin{alignat*}{1}
  0 = & (-3k_N+5)a_N^1 - 15a_N^2 + 10a_N^3,
  \\
  0 = & - a_N^1+(3-3k_N)a_N^2 - 2a_N^3,
  \\
  0 = & -4a_N^1 + 12a_N^2 - (3k_N + 8)a_N^3
\end{alignat*} for $(a_N^1,a_N^2,a_N^3)$. The determinant of this
system vanishes only for $k_N=0$ which is a contradiction.\ep \bc The
holonomy group of the Finsler manifold $(H_3, \mathcal F)$ has an
infinite dimensional tangent Lie algebra.  \ec We remark here, that it
remains an interesting open question: {\it Is there a nonsingular
  ($y$-global) Finsler manifold whose holonomy group is infinite
  dimensional?}


\begin{thebibliography}{30}
\bibitem{Bar} W. Barthel, \emph{Nichtlineare Zusammenh\"ange und deren
    Holonomiegruppen,} J. Reine Angew. Math. {\bf 212} (1963),
  pp. 120-149.
\bibitem{ChSh} S. S. Chern, Z. Shen, \emph{Riemann-Finsler geometry,}
  Nankai Tracts in Mathematics 6, World Scientific, (2005).
\bibitem{Kob} S. Kobayashi, \emph{Transformation Groups in
    Differential Geometry,} Ergebnisse der Mathematik und ihrer
  Grenzgebiete 70, Springer, (1972).
\bibitem{Koz1} L. Kozma, \emph{On Landsberg spaces and holonomy of
    Finsler manifolds,} Contemporary Mathematics, {\bf 196} (1996),
  177-185.
\bibitem{Koz2} L. Kozma, \emph{Holonomy structures in Finsler
    geometry,} Part 5. in Handbook of Finsler Geometry, (Ed.:
  P. L. Antonelli,) pp. 445-490., Kluwer Academic Publishers,
  Dordrecht, (2003).
\bibitem{KrMi} A. Kriegl, P. W. Michor, \emph{The Convenient Setting
    for Global Analysis,} AMS, Providence, Surveys and Monographs 53,
  (1997).
\bibitem{Mic1} P. W. Michor, \emph{Gauge Theory for Fiber Bundles,}
  Monographs and Textbooks in Physical Sciences, Lecture Notes 19,
  Bibliopolis, Napoli, (1991).
\bibitem{Num} S. Numata, \emph{On Landsberg spaces of scalar
    curvature,} J. Korean Math. Soc. 12 (1975), pp. 97-100.
\bibitem{Shen1} Z. Shen, \emph{Differential Geometry of Spray and
    Finsler Spaces,} Kluwer Academic Publishers, Dordrecht, (2001).
\bibitem{Shen2} Z. Shen, \emph{Some Open Problems in Finsler
    Geometry,} January 7, 2009.
  \\
  {\tt {}http://www.math.iupui.edu/~zshen/Research/papers/Problem.pdf}
\bibitem{Sza} Z. I. Szab\'o, \emph{Positive definite Berwald spaces,}
  Tensor, New Ser., {\bf 35} (1981), pp. 25-39.
\end{thebibliography}
\end{document}